\documentclass[hidelinks,onefignum,onetabnum]{siamart250211}
\pdfoutput=1

\usepackage{rotating}
\usepackage{subfig}
\usepackage{amsmath}
\usepackage[english]{babel}
\usepackage{setspace}
\usepackage{amsfonts}
\usepackage{graphicx}
\usepackage{epstopdf}
\usepackage{algorithmic}
\usepackage{mathtools}
\usepackage{multirow}
\usepackage{booktabs} 
\usepackage{caption}
\usepackage{makecell}
\usepackage{cleveref}
\usepackage{amsopn}
\usepackage{bm}  
\usepackage{amssymb}
\allowdisplaybreaks[4]

\newsiamremark{remark}{Remark}
\newsiamremark{hypothesis}{Hypothesis}
\crefname{hypothesis}{Hypothesis}{Hypotheses}
\newsiamthm{claim}{Claim}

\headers{}{}

\title{Fractional calculus via variable-transform-based spectral approximations}

\author{Xiaolin Liu\thanks{School of Mathematical Sciences, University of Science and Technology of China, 96 Jinzhai Road, Hefei 230026, Anhui, China (\email{xiaolin9907@mail.ustc.edu.cn},  \email{kuanxu@ustc.edu.cn}).}  \and Kuan Xu\footnotemark[1]}

\def\mA{\mathcal{A}}
\def\mB{\mathcal{B}}

\def\mD{\mathcal{D}}

\def\mF{\mathcal{F}}

\def\mI{\mathcal{I}}
\def\mJ{\mathcal{J}}

\def\mM{\mathcal{M}}

\def\mO{\mathcal{O}}

\def\mR{\mathcal{R}}
\def\mS{\mathcal{S}}

\def\md{\mathrm{d}}

\def\bf{\boldsymbol{f}}

\def\veps{\varepsilon}

\def\ema{\epsilon_{\mathrm{m}}}

\DeclareMathOperator{\arcsinh}{arcsinh}

\DeclareMathOperator{\arctanh}{arctanh}
\graphicspath{{./}}

\makeatletter
\renewcommand\paragraph{\@startsection{paragraph}{4}{1.5em}
  {1ex \@plus1ex \@minus.1ex}
  {-1em}
  {\normalfont\normalsize\bfseries}}
\makeatother

\begin{document}
\maketitle

\begin{abstract}
We present a novel and unifying framework for constructing spectral approximations to fractional integral operators. These spectral approximations are based on transplanted Chebyshev polynomials, which are obtained by composing Chebyshev polynomials with a variable transform. When an algebraic transform is used, the framework produces spectral approximations based on Jacobi fractional polynomials. When an exponential transform is used, it yields a versatile spectral approximation that is applicable to a much broader class of fractional calculus problems. The construction of such spectral approximations is both numerically stable and optimal in terms of complexity. These spectral approximations lead to stable and fast spectral methods for fractional calculus. The spectral method based on the double-exponential transform is demonstrated through extensive numerical examples that are intractable for existing methods.
\end{abstract}

\begin{keywords}
fractional calculus, fractional integral operator, fractional integral equations, fractional differential equations, Riesz fractional integral, spectral method, variable transform, algebraic transform, double-exponential transform, Chebyshev polynomials
\end{keywords}

\begin{MSCcodes}
26A33, 
34A08, 
65N35, 
47A58, 
65R20, 
45D05, 
45E10 

\end{MSCcodes}

\section{Introduction}\label{sec:intro}
In this paper, we propose a new framework for the design and construction of spectral approximations to the left- and right-sided fractional integral operators (FIOs)
\begin{subequations}
\begin{align}
_{-1}\mathcal{I}_{x}^{\mu}[u](x) = \frac{1}{\Gamma(\mu)}\int_{-1}^{x} \frac{u(t)}{(x-t)^{1-\mu}} \, \mathrm{d} t \label{fiol}
\end{align}
and
\begin{align}
_{x}\mathcal{I}_{1}^{\mu}[u](x) = \frac{1}{\Gamma(\mu)}\int_{x}^{1} \frac{u(t)}{(t-x)^{1-\mu}} \, \mathrm{d} t \label{fior}
\end{align}
\end{subequations}
for $x \in [-1,1]$, where $\Gamma(\cdot)$ denotes the Gamma function. By employing suitable variable transforms, the proposed framework yields spectral approximations to FIOs that are applicable to any fractional order $\mu > 0$. These approximations, in turn, give rise to highly versatile spectral methods capable of solving a significantly broader class of fractional calculus problems than those tractable by existing spectral methods.

Existing spectral approximations to FIOs or fractional differential operators (FDOs) suffer from several limitations, particularly when applied to fractional integral equations (FIEs) and fractional differential equations (FDEs). The collocation method based on polyfractonomials (PFC) \cite{zay}, the Petrov--Galerkin method using generalized Jacobi functions (GJFPG) \cite{che2}, and the sum-space method (SS) \cite{hal} all employ variants of weighted Jacobi polynomials as (part of) their basis functions. In particular, the PFC and GJFPG methods approximate algebraic singularities in the solution using polynomials or fractional-power polynomials that do not match the true singular behavior of the solution. Consequently, despite being termed ``spectral'', these methods often fail to achieve spectral convergence; instead, it is common for the computed solutions of FIEs or FDEs to converge only algebraically \cite{che2,pu,zay}. The SS method uses bases that span the direct sum of suitably weighted ultraspherical and Jacobi polynomial spaces, leading to matrix approximations of FIOs and FDOs that may be ill-conditioned \cite{pu}. Moreover, the SS method is restricted to FIEs and FDEs of rational order, i.e., $\mu = p/q$, where $p$ and $q$ are positive integers. It requires $2q$ distinct weighted orthogonal polynomial bases, and thus becomes increasingly cumbersome to implement when $q$ is large. From the perspective of computational efficiency, the matrix approximations constructed by the PFC method may be dense. Furthermore, the PFC, GJFPG, and SS methods all rely on Jacobi polynomials. For Jacobi polynomials with general integer or fractional parameters, the transforms between values and coefficients are significantly less efficient than those for Chebyshev or Legendre polynomials\footnote{Even for Legendre polynomials, existing algorithms for transforms between values and coefficients are not sufficiently fast. These algorithms are often referred to as ``galactic'' due to the high powers of $\log n$ in their asymptotic complexity and the large hidden constants in the big-$O$ notation.}.

The generalized log-orthogonal function Petrov--Galerkin (GLOFPG) method \cite{che} represents the solution to FDEs using generalized log-orthogonal functions (GLOFs)
\begin{align}
H^{(\delta, \nu, \rho)}_n(x) = x^{\frac{\nu-\rho}{2}}L^{(\delta)}_n\bigl(-(\nu+1)\log x\bigr), \label{glof}
\end{align}
and can, in principle, solve a broader class of problems than other existing methods. Despite this generality, several drawbacks hinder its practical applicability. The three most significant ones are as follows: (1) the condition number of the matrix approximations to FIOs or FDOs grows rapidly with the discretization size; even for approximation matrices of only a few hundred in dimension, the resulting linear systems can become so ill-conditioned that the computed solutions may contain virtually no accurate digits; (2) the evaluation of high-degree GLOFs is prone to numerical underflow or overflow, which further restricts the maximum feasible size of the matrix approximations; (3) for discretization size $N$, the overall $\mathcal{O}(N^4)$ computational complexity further limits the method in solving large-scale problems.

The recently developed JFP spectral method constructs spectral approximations to FIOs using a Chebyshev version of the Jacobi fractional polynomials (JFPs)
\begin{align}
Q_n^{\alpha,\beta}(x) = \left(\frac{1+x}{2}\right)^{\alpha} T_n\left(2\left(\frac{1+x}{2}\right)^{\beta}-1\right), \label{jfp}
\end{align}
which form an orthogonal basis with respect to the weight function $w(x) = (1+x)^{\frac{\beta}{2}-1 - 2\alpha} \left(1 - 2^{-\beta}(1+x)^{\beta}\right)^{-\frac{1}{2}}$. Here and throughout this paper, $T_n$ denotes the $n$th Chebyshev polynomial of the first kind. The parameters $\alpha > -1$ and $\beta > 0$ are determined by the fractional order $\mu$ and other properties of the given problem. These approximations can be constructed stably with optimal computational complexity and yield a well-conditioned spectral method for solving FIEs and FDEs. It has been shown that the JFP-based spectral method significantly outperforms all the aforementioned methods when it is applicable. For instance, the JFP spectral method can solve FIEs of the general form
\begin{align}
a_0(x) u(x) + a_1(x) \mathcal{I}^{\mu_1}\left[b_1(\rotatebox{45}{\scalebox{0.55}{$\square$}}) u\right](x) + \cdots + a_{\ell}(x) \mathcal{I}^{\mu_{\ell}}\left[b_{\ell}(\rotatebox{45}{\scalebox{0.55}{$\square$}}) u\right](x) = f(x) \label{fie}
\end{align}
under the following assumptions.

\begin{itemize}
\item The fractional orders $\mu_l > 0$ and $\mu_l = (p_l/q_l)\mu_1$ for $l = 2, \ldots, \ell$, where $p_l, q_l \in \mathbb{N}^+$ and $p_l/q_l$ is irreducible.
\newcounter{tempcounter}
\setcounter{tempcounter}{\value{enumi}}
\end{itemize}

Let $q$ denote the least common multiple of $\{q_l\}_{l=2}^{\ell}$, define $\tilde{\beta} = \mu_1/q$, and set $\mu_0 = 0$ and $b_0(x) = 1$.

\begin{itemize}
\setcounter{enumi}{\value{tempcounter}}
\item The variable coefficients $a_l(x)$ and $b_l(x)$ for $l = 0, 1, \ldots, \ell$, as well as the right-hand side $f(x)$, can all be approximated by finite expansions in $\{Q_n^{\alpha^{a_l},\tilde{\beta}}(x)\}_{n = 0}^{\infty}$, $\{Q_n^{\alpha^{b_l},\tilde{\beta}}(x)\}_{n = 0}^{\infty}$, and $\{Q_n^{\alpha^{f},\tilde{\beta}}(x)\}_{n = 0}^{\infty}$ for some $\alpha^{a_l}$, $\alpha^{b_l}$, and $\alpha^{f}$, respectively.

\item For $0 \leq j \neq l \leq \ell$, there exist $k_j, k_l \in \mathbb{N}$ such that  
\begin{align*}
&\alpha^{f}- \alpha^{a_l} - \mu_l - k_l\tilde{\beta} > -1,\\
&\bigl(\alpha^{a_j}-\alpha^{a_l}\bigr) + \bigl(\alpha^{b_j}-\alpha^{b_l}\bigr) + \bigl(\mu_j-\mu_l\bigr) + (k_j-k_l)\tilde{\beta} = 0.
\end{align*}
\end{itemize}

On the one hand, despite the complexity of these conditions, \cref{fie} under these additional assumptions covers a large class of FIEs encountered in practice, demonstrating the applicability of the JFP spectral method. On the other hand, we wish to be able to solve FIEs beyond this category, or more generally, fractional calculus problems in an even broader sense; see the examples in \cref{sec:example}.

In fact, the limitation of the JFP spectral method can be attributed to the algebraic transform whose inverse is given by
\begin{align}
y = \psi^{-1}(x) = 2\left(\frac{1+x}{2}\right)^{\beta}-1. \label{algebraic}
\end{align}
This inverse transform maps $[-1,1]$ onto itself and, more importantly, incorporates a singularity of order $\beta$ into the Chebyshev polynomials, so that the JFP basis can be regarded as polynomials in the fractional power $\left((1+x)/2\right)^{\beta}$. This transform is key to the success of the JFP method, but at the same time limits its applicability to more general problems.

The success and limitations of the JFP method nevertheless motivate us to consider variable transforms beyond those of purely algebraic type. Hence, in this paper we work with a general variable transform
\begin{align}
x = \psi(y) \label{mapping}
\end{align}
as far as possible before turning to two important specific instantiations. The only requirement is that $\psi(y)$ is monotonically increasing and maps $[-1, 1]$ onto itself. We shall take the transplanted Chebyshev polynomials (TCPs)
\begin{align}
Q_n(x) = T_n\left(\psi^{-1}(x)\right), \quad x \in [-1, 1] \label{tcp}
\end{align}
as the basis functions, as they are orthogonal on $[-1, 1]$ with respect to the weight function
\begin{align*}
w(x) = \left. \left(\frac{\mathrm{d}}{\mathrm{d} x} \psi^{-1}(x)\right) \middle/ \sqrt{1-\left(\psi^{-1}(x)\right)^2}\right..
\end{align*}
With this general choice of basis functions, we obtain a novel and unifying framework that produces spectral approximations to FIOs which are broadly applicable in fractional calculus without compromising computational efficiency. To be specific, consider a given function $u(x)$ that can be approximated by a TCP series. That is, $u(x) = \mathbf{Q}c$, where the quasimatrix $\mathbf{Q} = \left[Q_0(x), Q_1(x), Q_2(x), \dots \right]$ and $c = \left(c_0,c_1,c_2,\ldots\right)^{\top}$ is the coefficient vector. We aim to construct an infinite-dimensional matrix $\mA$ such that
\begin{align*}
\mI^{\mu}[u](x) = \mathbf{Q} \mA c,
\end{align*}
where $\mI^{\mu}$ is either $_{-1}\mathcal{I}_{x}^{\mu}$ or $_{x}\mathcal{I}_{1}^{\mu}$. With a concrete choice of variable transform, such spectral approximations give rise to spectral methods for solving a wide range of fractional calculus problems.

Throughout this paper, we say that a matrix $A$ has bandwidths $(\xi_l, \xi_u)$ if its entries satisfy $A_{ij}=0$ whenever $i-j>\xi_l$ or $j-i>\xi_u$. The notations $\Re( \rotatebox{45}{\scalebox{0.5}{$\square$}} )$ and $\Im( \rotatebox{45}{\scalebox{0.5}{$\square$}} )$ denote the real and imaginary parts, respectively. We denote by $\ema$ the machine epsilon of floating-point arithmetic. Following standard convention, the two-parameter Mittag--Leffler function is denoted by $E_{\zeta, \eta }$.

The remainder of this paper is organized as follows. In \cref{sec:frac}, we establish a unifying framework that gives rise to spectral approximations of FIOs for general variable transforms. In \cref{sec:vt}, these approximations are instantiated for algebraic and exponential transforms. In \cref{sec:example}, we demonstrate the capability of the spectral method based on the double-exponential transform using problems of various types that are intractable for existing spectral methods. Finally, we conclude the paper with discussions and an outlook in \cref{sec:remarks,sec:out}, respectively.

\section{Construction of the spectral approximation}\label{sec:frac}
In this section, we establish a unifying framework for constructing spectral approximations to FIOs under the general variable transform \cref{mapping}. We begin with several identities satisfied by the Chebyshev polynomials of the first and second kinds, $T_n(x)$ and $U_n(x)$. Their proofs can be found in standard references, e.g., \cite{sze}.  In the rest of this paper, we assume $U_{-1}(x) = 0$.
\begin{lemma}
For $n \geq 2$,
\begin{subequations}
\begin{align}
U_n(x) &= 2T_n(x) + U_{n-2}(x), \label{utu} \\
\frac{\md}{\md x}T_n(x) &= nU_{n-1}(x), \label{dtu} \\
U_n(x) &= 2x U_{n-1}(x) - U_{n-2}(x). \label{uuu}
\end{align}
\end{subequations}
\end{lemma}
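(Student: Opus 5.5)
The natural route is the trigonometric substitution $x=\cos\theta$ with $\theta\in(0,\pi)$, under which $T_n(x)=\cos n\theta$ and $U_n(x)=\sin((n+1)\theta)/\sin\theta$. Each of \cref{utu,dtu,uuu} then becomes an elementary trigonometric identity on the open interval $(-1,1)$. Both sides are polynomials in $x$, so equality on $(-1,1)$ gives equality for all $x$ by continuity, or by the identity theorem for polynomials. I would prove the three identities in the order \cref{uuu}, \cref{utu}, \cref{dtu}.

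For \cref{uuu}, multiply through by $\sin\theta$. The claim becomes
\begin{align*}
\sin((n+1)\theta)+\sin((n-1)\theta)=2\cos\theta\,\sin(n\theta),
\end{align*}
which is the sum-to-product formula. For \cref{utu}, again multiply by $\sin\theta$. The claim becomes
\begin{align*}
\sin((n+1)\theta)-\sin((n-1)\theta)=2\cos(n\theta)\sin\theta,
\end{align*}
which is the difference-to-product formula. Both hold for every $n\ge 1$, so they certainly hold for $n\ge 2$. This range also covers the convention $U_{-1}=0$ where it is needed.

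For \cref{dtu}, differentiate $T_n(\cos\theta)=\cos n\theta$ with respect to $\theta$ using the chain rule. This gives $-\sin\theta\,T_n'(\cos\theta)=-n\sin n\theta$. Dividing by $\sin\theta\neq 0$ yields $T_n'(x)=n\sin(n\theta)/\sin\theta=nU_{n-1}(x)$.

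None of these steps is a real obstacle. The only point needing care is the passage from $(-1,1)$ to all real or complex $x$, together with the division by $\sin\theta$. Both are handled by noting that each side is a polynomial. If one prefers to avoid the trigonometric substitution entirely, all three identities can instead be proved by induction on $n$ from the three-term recurrences $T_{n+1}=2xT_n-T_{n-1}$ and $U_{n+1}=2xU_n-U_{n-1}$, starting from $T_0=1$, $T_1=x$, $U_0=1$, $U_1=2x$. Alternatively, one can simply cite \cite{sze}.
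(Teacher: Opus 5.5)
Your proof is correct. The paper gives no argument of its own and only defers to standard references such as \cite{sze}; your substitution $x=\cos\theta$, combined with the fact that polynomials agreeing on $(-1,1)$ agree everywhere, is the standard textbook verification it points to, and it makes the lemma self-contained.
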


\subsection{Left-sided FIOs}\label{sec:left}
We consider the left-sided FIO \cref{fiol} by applying it to $u(x)$. By the linearity of the FIO, it follows that
\begin{align*}
_{-1}\mathcal{I}_{x}^{\mu}[u](x) = 
\left( 
    \vphantom{\begin{aligned}
      & \\ 
      & \\ 
      &
  \end{aligned}}
  \right.
  \begin{aligned}
    _{-1}\mathcal{I}_{x}^{\mu}[Q_0](x)\Bigg| ~_{-1}\mathcal{I}_{x}^{\mu}[Q_1](x) \Bigg| ~_{-1}\mathcal{I}_{x}^{\mu}[Q_2](x) \Bigg| \cdots  
  \end{aligned}
  \left.
  \vphantom{\begin{aligned}
      & \\ 
      & \\ 
      &
  \end{aligned}}
  \right)
c,
\end{align*}
and it therefore suffices to consider the columns individually. A key observation is that expanding $_{-1}\mathcal{I}_{x}^{\mu}[Q_n](x)$ in $\{Q_n(x)\}_{n=0}^{\infty}$ is equivalent to expanding $_{-1}\mathcal{I}_{x}^{\mu}[Q_n](\psi(y))$ in the Chebyshev polynomials $\{T_n(y)\}_{n=0}^{\infty}$. Hence, we apply the forward transform $x = \psi(y)$ and the change of variables $t = \psi(\tau)$ to $_{-1}\mathcal{I}_{x}^{\mu}[Q_n](x)$ to obtain
\begin{subequations}
\begin{align}
_{-1}\mathcal{I}_{x}^{\mu} &\left[Q_n\right](\psi(y)) = \frac{1}{\Gamma(\mu)} \int_{-1}^{y} \frac{T_n(\tau)\psi'(\tau)}{\left(\psi(y)-\psi(\tau)\right)^{1-\mu}} \md \tau \label{psi1} \\ 
&= \frac{1}{\Gamma(1+\mu)}\biggl((-1)^n\left(1+\psi(y)\right)^{\mu} + n\underbrace{\int_{-1}^{y} U_{n-1}(\tau)\left(\psi(y)-\psi(\tau)\right)^{\mu}\md \tau }_{\mathclap{\varphi_n(y)}} \biggr), \label{IQnl}
\end{align}
\end{subequations}
where the last equality follows from integration by parts and the identity $T_n(-1)=(-1)^n$. Let us denote by $\varphi_n(y)$ the integral term in \cref{IQnl} and apply a third change of variables
\begin{align}
\tau = y - (1+y)t \label{cov}
\end{align}
to $\varphi_n(y)$, where we reuse the notation $t$. It then becomes a definite integral:
\begin{subequations}
\begin{align}
\varphi_n(y) &= (1+y)\int_{0}^{1} U_{n-1}\bigl(y-(1+y)t\bigr)\bigl(\psi(y)-\psi(y-(1+y)t)\bigr)^{\mu}\md t \label{psi0} \\
&= (1+y)\int_{0}^{1} t^{\mu} U_{n-1}\bigl(y-(1+y)t\bigr)G(y,t)\md t, \label{psi}
\end{align}
\end{subequations}
where, for $(y,t)\in[-1,1]\times[0,1]$, the bivariate function $G(y, t)$ is defined by
\begin{align}
G(y,t) = \left(\frac{\psi(y)-\psi(y-(1+y)t)}{t}\right)^{\mu}. \label{G}
\end{align}
We now assume that $G(y,t)$ can be represented or approximated by a rank-$r$ approximant. That is,
\begin{align}
G(y,t) \approx \sum_{j=1}^r \sigma_j f_j(y) g_j(t), \label{Glr}
\end{align}
where $f_j(y)$ and $g_j(t)$ are either smooth or smooth apart from endpoint singularities. We defer the construction of \cref{Glr} to \cref{sec:vt} when $\psi(y)$ is instantiated. Substituting \cref{Glr} into \cref{psi}, we obtain
\begin{align}
\varphi_n(y) = \left(1+y\right) \sum_{j=1}^{r} \sigma_j f_j(y) \varphi_{n}^{j}(y), \label{psin}
\end{align}
where the moment is given by
\begin{align*}
\varphi_{n}^{j}(y) = \int_{0}^{1} t^{\mu} U_{n-1}\left(y - (1+y)t\right) g_j(t) \md t.
\end{align*}
It is readily seen that $\varphi_n^j(y)$ is a polynomial in $y$ of degree at most $n-1$. The following theorem is the main mathematical result of this paper, showing that $\varphi_n^j(y)$ satisfies a three-term recurrence relation for any admissible variable transform.
\begin{theorem}
For $n \geq 2$ and all $j$,
\begin{align}
\left((1+y)\frac{\md}{\md y} - n\right)\varphi_{n+1}^j(y) = \left((1+y)\frac{\md}{\md y} + n\right)\varphi_{n-1}^j(y) + 2n\varphi_n^j(y). \label{rec}
\end{align}
\end{theorem}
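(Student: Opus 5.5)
The approach is to show that the operator $(1+y)\frac{\md}{\md y}$, applied to the moment $\varphi_n^j(y)$, acts inside the integral as an operator on the Chebyshev polynomial alone. The recurrence \cref{rec} then reduces to a purely polynomial identity among $U_{n-2}$, $U_{n-1}$ and $U_n$, which follows from \cref{utu}, \cref{dtu} and \cref{uuu}.

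First, I set $s = y-(1+y)t$ and note two facts. We have $\partial s/\partial y = 1-t$, and $1+s = (1+y)(1-t)$. The integrand of $\varphi_{n}^{j}(y)$ is $t^{\mu}g_j(t)$ times a polynomial in $y$, and $t^\mu g_j(t)$ is integrable on $[0,1]$, since the moments are assumed to exist. So differentiation under the integral sign is legitimate, and for every $m\ge 0$
\begin{align*}
(1+y)\frac{\md}{\md y}\varphi_{m+1}^j(y) = \int_0^1 t^\mu g_j(t)\,(1+y)(1-t)\,U_m'(s)\,\md t = \int_0^1 t^\mu g_j(t)\,(1+s)\,U_m'(s)\,\md t .
\end{align*}
Consequently \cref{rec} follows by integrating against $t^\mu g_j(t)$ the pointwise identity
\begin{align*}
(1+s)U_n'(s) - nU_n(s) = (1+s)U_{n-2}'(s) + nU_{n-2}(s) + 2nU_{n-1}(s),
\end{align*}
provided this identity holds for all $s$ and $n\ge 2$. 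The indices are all nonnegative there, and the convention $U_{-1}=0$ is not even needed.

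To verify the identity, I differentiate \cref{utu} and use \cref{dtu}. This gives $U_n'(s)-U_{n-2}'(s) = 2T_n'(s) = 2nU_{n-1}(s)$. Substituting, the difference of the two sides becomes
\begin{align*}
(1+s)\,2nU_{n-1}(s) - n\bigl(U_n(s)+U_{n-2}(s)\bigr) - 2nU_{n-1}(s) = n\bigl(2sU_{n-1}(s) - U_n(s) - U_{n-2}(s)\bigr),
\end{align*}
and this vanishes by the three-term recurrence \cref{uuu}.

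The only step requiring real insight is the first one. One has to recognize that $(1+y)\partial_y$ applied to $U_{n-1}(y-(1+y)t)$ equals $(1+s)U_{n-1}'(s)$ with no leftover dependence on $t$. This is what makes the recurrence independent of $g_j$, and hence of the transform $\psi$. Everything after that is a routine Chebyshev manipulation.
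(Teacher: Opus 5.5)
Your proof is correct and follows essentially the paper's route. Both arguments differentiate \cref{utu} and apply \cref{dtu}, use $1+s=(1+y)(1-t)$ (the paper writes this as $-t=(s-y)/(1+y)$), and finish with \cref{uuu}; the only difference is that you check the identity pointwise in $s$ before integrating against $t^{\mu}g_j(t)$, which avoids the paper's intermediate division by $1+y$ in \cref{I}.
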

\begin{proof}
By \cref{utu}, we have
\begin{align}
\frac{\md}{\md y} \varphi_{n+1}^j(y)
&= \int_{0}^{1} t^{\mu}\frac{\md}{\md y}\bigl(2T_n\left(y - (1+y)t\right) + U_{n-2}\left(y - (1+y)t\right)\bigr) g_j(t)\md t \nonumber \\
&= 2n\int_{0}^{1} t^{\mu}(1-t)U_{n-1}\left(y - (1+y)t\right) g_j(t)\md t + \frac{\md}{\md y}\varphi_{n-1}^j(y), \label{rec1_int}
\end{align}
where \cref{dtu} is used in the last equality. Denote the integral in \cref{rec1_int} by $I(y)$. Then
\begin{align}
I(y)
&= \varphi_n^j(y) - \int_{0}^{1} t^{\mu+1}U_{n-1}\left(y - (1+y)t\right) g_j(t)\md t \nonumber \\
&= \varphi_n^j(y) + \frac{1}{1+y}\int_{0}^{1} t^{\mu}\left(y-(1+y)t-y\right)U_{n-1}\left(y - (1+y)t\right) g_j(t)\md t \nonumber \\
&= \frac{1}{1+y}\varphi_n^j(y) + \frac{1}{1+y}\int_{0}^{1} t^{\mu}\left(y-(1+y)t\right)U_{n-1}\left(y - (1+y)t\right) g_j(t)\md t \nonumber \\
&= \frac{1}{1+y}\varphi_n^j(y) + \frac{1}{2(1+y)}\left(\varphi_{n+1}^j(y) + \varphi_{n-1}^j(y)\right), \label{I}
\end{align}
where we have used \cref{uuu}. Substituting \cref{I} into \cref{rec1_int} and simplifying yields \cref{rec}.
\end{proof}

Since \cref{rec} is a first-order ODE, we require the initial values $\varphi^j_0(y)$, $\varphi^j_1(y)$, and $\varphi^j_2(y)$, together with a boundary condition to carry out the recurrence. The coefficient $(1+y)$ vanishes at $y=-1$, so the ODE \cref{rec} is singular at the left endpoint and does not satisfy the assumptions required by the Picard--Lindel\"{o}f theorem there. Therefore, we impose the right Dirichlet boundary condition, which requires the value of $\varphi_{n+1}^j(y)$ at $y=1$. That is, we need to evaluate
\begin{align}
\varphi_n^j(1) = \int_{0}^{1} t^{\mu} U_{n-1}\left(1 - 2t\right) g_j(t)\,\md t \label{psink1}
\end{align}
for $n \ge 3$. The computation of the initial values and the integral \cref{psink1} depends on the choice of the variable transform and is discussed in \cref{sec:vt}. For the moment, we assume that they are available.

Let $\mR^j$ be the infinite-dimensional matrix whose $n$th column $\mR^j_n$ collects the Chebyshev coefficients of $\varphi_n^j(y)$. We solve \cref{rec} using the ultraspherical spectral method \cite{olv}, which yields the infinite linear system
\begin{align}
\begin{pmatrix*}[c]
\mB \\[1mm]
\mM \mD - n\mS
\end{pmatrix*}
\mR_{n+1}^j
=
\begin{pmatrix*}[c]
\varphi_{n+1}^j(1) \\[1mm]
\mF
\end{pmatrix*}, \label{systemleft}
\end{align}
where
\begin{align*}
\mF = \left( \mM \mD + n\mS \right)\mR_{n-1}^j + 2n\mS \mR_n^j.
\end{align*}
Here, $\mD$, $\mM$, and $\mS$ denote the first-order differentiation matrix mapping Chebyshev polynomials of the first kind $\{T_n(y)\}_{n=0}^{\infty}$ to those of the second kind $\{U_n(y)\}_{n=0}^{\infty}$, the multiplication matrix representing multiplication by $1+y$, and the conversion matrix from Chebyshev coefficients of the first kind to those of the second kind, respectively. The resulting infinite-dimensional system is almost banded, with a nonzero top row representing the boundary condition, and has bandwidths $(1,1)$. Since $\varphi_n^j(y)$ is a polynomial of degree at most $n-1$, $\mR^j_n$ has at most $n$ nonzero entries. Therefore, unlike in a standard implementation of the ultraspherical spectral method, where adaptive QR is applied until the nontrivial part of the right-hand side has norm below a prescribed tolerance, \cref{systemleft} can be safely truncated to a finite system of size $n+2$ for $n \geq 2$.

Equation \cref{systemleft} is to be solved repeatedly for varying values of $n$ and for $j = 1, 2, \dots, r$. There are several techniques for accelerating the solution. First, a recombined basis that satisfies the Dirichlet boundary condition can be employed to transform \cref{systemleft} into a strictly banded system with bandwidths $(1,2)$, for which standard banded solvers, e.g., \textsc{LAPACK}'s subroutines \texttt{gbtrf} and \texttt{gbtrs}, can be used for significantly faster solutions. For details, see, for example, \cite{qin}. Second, if the algorithm is implemented in a matrix-oriented programming language, the $r$ $n\times n$ linear systems can be solved simultaneously by collecting the $r$ right-hand sides into a single $n\times r$ matrix and performing the QR factorization in a batched manner. Third, for a moderate $r$, the entire task is embarrassingly parallel.

Suppose that $f_j(y)$ is a polynomial in $y$ of degree at most $K$ for all $j$. Once $\varphi_n^j(y)$ is available for $j = 1, 2, \dots, r$, $\varphi_n(y)$ can be assembled following \cref{psin}, where the product of $f_j(y)$ and $\varphi_n^j(y)$ can be computed in $O(K n)$ flops via direct matrix-vector multiplication. Since we are working with variable $y$, the multiplication matrix here is exactly the one used in the standard ultraspherical spectral method. Finally, we approximate $\left(1+\psi(y)\right)^{\mu}$ by a Chebyshev expansion and compute the coefficients of $_{-1}\mathcal{I}_{x}^{\mu} \left[Q_n\right](\psi(y))$ following \cref{IQnl}. Note that these coefficients admit a dual interpretation---they are the Chebyshev coefficients in $y$ of $_{-1}\mathcal{I}_{x}^{\mu}\left[Q_n\right](\psi(y))$, and equivalently the TCP coefficients in $x$ of $_{-1}\mathcal{I}_{x}^{\mu}\left[Q_n\right](x)$.

Suppose that $\left(1+\psi(y)\right)^{\mu}$ admits a TCP approximation of degree at most $K$, as is usually the case. Since $\mR^j$ is upper triangular, the infinite-dimensional matrix $\mA$ is lower-banded with lower bandwidth $K+1$. In practice, what we construct is a truncated version of $\mA$. Thus, we let $N$ denote the truncation size of $\mA$ in the remainder of this paper. Solving the corresponding linear system by a direct method requires $\mO(K N^2)$ flops.

\cref{alg:prototype} summarizes the prototype algorithm without going into the details of the specific variable transform.

\begin{algorithm}[t!]
\caption{Prototype algorithm for constructing the spectral approximation to a left-sided FIO using TCPs.}\label{alg:prototype}
\begin{algorithmic}
\STATE{Construct a low-rank approximation of $G(y,t)$ in the form of \cref{Glr}.}
\STATE{Compute the TCP coefficients of $\left(1+\psi(y)\right)^{\mu}$.}
\FOR{$j = 1$ to $r$}
  \STATE{Compute $\varphi_n^j(y)$ for $n = 0, 1, 2$.}
  \FOR{$n = 3$ to $N$}
    \STATE{Compute the boundary value $\varphi_n^j(1)$.}
    \STATE{Solve \cref{rec} for the coefficients of $\varphi_n^j(y)$.}
  \ENDFOR
\ENDFOR
\FOR{$n = 0$ to $N$}
  \STATE{Compute the coefficients of $\varphi_n(y)$ following \cref{psin}.} 
  \STATE{Compute the coefficients of ${}_{-1}\mI_x^{\mu}[Q_n](\psi(y))$ following \cref{IQnl} and store them in the $n$th column of $\mA$.}
\ENDFOR
\end{algorithmic}
\end{algorithm}

\subsection{Right-sided FIOs}
Since the approximation to the right-sided FIO \cref{fior} is analogous to the left-sided one, we list only the key results that replace the corresponding steps in \cref{alg:prototype} to give the algorithm for the right-sided FIO. The right-sided counterparts of \cref{IQnl}, \cref{psi}, and \cref{G} are
\begin{align}
{}_x\mI_1^{\mu}[Q_n]\left(\psi(y)\right) = \frac{1}{\Gamma(1+\mu)}\left( \left(1-\psi(y)\right)^{\mu} + n\varphi_n(y) \right), \label{IQnr}
\end{align}
where $\varphi_n(y) = (y-1)\int_{0}^{1} t^{\mu} U_{n-1}\left(y+(1-y)t\right) G(y, t) \md t$ and 
\begin{align*}
G(y,t) = \left(\frac{\psi(y+(1-y)t)-\psi(y)}{t}\right)^{\mu}.
\end{align*}
The function $\varphi_n(y)$ can further be written as
\begin{align}
\varphi_n(y) = \left(y-1\right) \sum_{j=1}^{r} \sigma_j f_j(y) \varphi_{n}^{j}(y), \label{psinr}
\end{align}
where the moment $\varphi_n^j(y) = \int_{0}^{1} t^{\mu} U_{n-1}\left(y+(1-y)t\right)g_j(t)\md t$ satisfies the recurrence relation
\begin{align}
\left((1-y)\frac{\md}{\md y} + n\right)\varphi_{n+1}^j(y) = \left((1-y)\frac{\md}{\md y} - n\right)\varphi_{n-1}^j(y) + 2n\varphi_n^j(y).\label{recr}
\end{align}
To enforce a left Dirichlet boundary condition, we need to evaluate 
\begin{align}\label{psink1r}
\varphi_n^j(-1) = \int_{0}^{1}t^{\mu} U_{n-1}\left(2t-1\right)  g_j(t) \md t.
\end{align}

We solve \cref{recr} in an analogous manner using the ultraspherical spectral method to obtain the Chebyshev coefficients of $\{\varphi_n^j(y)\}_{n=0}^N$ for $j = 1, 2, \dots, r$. We then assemble $\varphi_n(y)$ following \cref{psinr}. Finally, we compute the Chebyshev coefficients of $\left(1-\psi(y)\right)^{\mu}$ and those of ${}_x\mathcal{I}_1^{\mu}[Q_n]\left(\psi(y)\right)$.

\subsection{Multiplication operator}
As we shall see in \cref{sec:example}, we need the matrix representation of multiplication by a TCP expansion for fractional calculus problems involving variable coefficients. Again, representing a given function by a TCP expansion is equivalent to approximating the transplanted function by a Chebyshev series in the transplanted variable, i.e.,
\begin{align*}
f(x) = \sum_{j=0}^{\infty} c_j Q_j(x) \Longleftrightarrow f(\psi(y)) = \sum_{j=0}^{\infty} c_j T_j(y).
\end{align*}
Thus, multiplication by a TCP series in $x$ amounts to multiplication by the corresponding Chebyshev series in $y$. The following lemma formalizes this in a rigorous manner.

\begin{lemma}\label{thm:M}
Multiplication of the infinite series $\sum_{m=0}^{\infty} c_m Q_m(x)$ by another infinite TCP series can be represented by the infinite-dimensional matrix
\begin{align*}
\mM = \frac{1}{2}\left[
\begin{pmatrix*}[r]
2c_0 & c_1 & c_2 & c_3 & \cdots\\
c_1 & 2c_0 & c_1 & c_2 & \ddots\\
c_2 & c_1 & 2c_0 & c_1 & \ddots\\
c_3 & c_2 & c_1 & 2c_0 & \ddots\\
\vdots & \ddots & \ddots & \ddots & \ddots
\end{pmatrix*} +
\begin{pmatrix*}
0 & 0 & 0 & 0 & \cdots\\
c_1 & c_2 & c_3 & c_4 & \ddots\\
c_2 & c_3 & c_4 & c_5 & \ddots\\
c_3 & c_4 & c_5 & c_6 & \ddots\\
\vdots & \ddots & \ddots & \ddots & \ddots
\end{pmatrix*}
\right],
\end{align*}
which coincides with the multiplication matrix for infinite Chebyshev series.
\end{lemma}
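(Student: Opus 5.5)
The plan is to reduce the statement to the product formula for Chebyshev polynomials by means of the variable transform. Write $Q_m(x)=T_m(\psi^{-1}(x))$. For any $x\in[-1,1]$ set $y=\psi^{-1}(x)\in[-1,1]$; this is a bijection because $\psi$ is monotonically increasing and maps $[-1,1]$ onto itself. The product identity
\begin{align*}
T_m(y)T_n(y)=\tfrac12\bigl(T_{m+n}(y)+T_{|m-n|}(y)\bigr),
\end{align*}
which follows from $T_k(\cos\theta)=\cos k\theta$ and the cosine product formula, then transplants verbatim to
\begin{align*}
Q_m(x)Q_n(x)=\tfrac12\bigl(Q_{m+n}(x)+Q_{|m-n|}(x)\bigr)
\end{align*}
for all $x\in[-1,1]$. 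Hence the algebra generated by the TCPs under pointwise multiplication is isomorphic to that of the Chebyshev polynomials, via the map $f\mapsto f\circ\psi$. Any matrix identity derived for Chebyshev series therefore holds for TCP series with the same coefficients.

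Next I would compute the action on a basis element. Take $f(x)=\sum_m c_mQ_m(x)$ and multiply it by $Q_n$:
\begin{align*}
f\,Q_n=\tfrac12\sum_m c_m\bigl(Q_{m+n}+Q_{|m-n|}\bigr).
\end{align*}
Collecting the coefficient of $Q_k$ gives the $(k,n)$ entry of the matrix:
\begin{itemize}
\item the first sum contributes $c_{k-n}$ when $k\ge n$;
\item the second sum contributes $c_{n+k}$ (from $m=n+k$) for every $k$, and $c_{n-k}$ (from $m=n-k$) when $k\le n$.
\end{itemize}
The bookkeeping step is checking the indices $k=0$ and $n=0$.
\begin{itemize}
\item For $k=0$ the terms with $m=n$ come from $|m-n|=0$, and the contribution must be the single term $\tfrac12 c_n$ when $n\ge1$. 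This is why the Hankel part has a zero first row.
\item For $n=0$ we have $Q_0=1$ and $Q_{|m|}=Q_m$, so the product formula double counts; this matches the column $\tfrac12(2c_k)=c_k$ when the Toeplitz part $c_{|k-n|}$ and the Hankel part $c_{k+n}$ (for $k\ge1$) are added.
\end{itemize}
Doing this carefully shows that the entries equal $\tfrac12\bigl(c_{|k-n|}+c_{k+n}\bigr)$ for $k\ge1$, that the first row is $\tfrac12 c_{|n|}$ with the diagonal entry $c_0$, and that this agrees with the stated Toeplitz-plus-Hankel matrix. This is exactly the known Chebyshev multiplication matrix of \cite{olv}, so the coincidence claim follows.

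Finally, to pass from basis elements to infinite series, I would take $g=\sum_n d_nQ_n$ and use bilinearity. This requires interchanging the double sum, which is justified under absolute summability of the coefficients; that holds, for instance, when $f\circ\psi$ and $g\circ\psi$ have absolutely convergent Chebyshev series. The main obstacle, modest as it is, is this convergence and rigor issue for infinite series, together with the $k=0$ and $n=0$ boundary bookkeeping. I would state the absolute summability assumption explicitly and note that for truncated (finite) expansions the identity is purely algebraic.
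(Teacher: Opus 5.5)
Your approach is the same as the paper's: transplant the product identity $T_kT_l=\tfrac12\bigl(T_{k+l}+T_{|k-l|}\bigr)$ through $y=\psi^{-1}(x)$, then invoke the standard Chebyshev multiplication matrix of \cite{olv}. The paper stops at that point. Your explicit entry check and your absolute-summability remark go beyond it.

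There is one concrete slip in that explicit check. Your closed form $\tfrac12\bigl(c_{|k-n|}+c_{k+n}\bigr)$ for $k\ge1$ is wrong on the diagonal $k=n\ge1$. There the coefficient of $Q_n$ in $fQ_n$ is $c_0+\tfrac12 c_{2n}$, because the $m=0$ term contributes twice: $c_0Q_0Q_n=\tfrac12 c_0(Q_n+Q_n)$. This is exactly why the Toeplitz part of the stated matrix carries $2c_0$, not $c_0$, on its diagonal.

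Your bullet list already gives the right value: $c_0$ from the first sum, and $c_0$ and $c_{2n}$ from the second. So only the summary formula, and your claim that it agrees with the stated matrix, need correcting. The check $f\equiv c_0$ shows the problem at once: it must give $c_0$ times the identity, but your formula yields $\tfrac12 c_0$ on the diagonal for $k\ge1$.

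The special cases to track are therefore $k=0$, $n=0$, and $m=0$ (which produces the doubled diagonal), not just the first two. With that fixed, the argument is complete and agrees with the paper's.
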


\begin{proof}
For any $k, l \in \mathbb{N}$,
\begin{align*}
Q_k(x) Q_l(x) &= T_k(y) T_l(y) = \frac{1}{2}\left(T_{k+l}(y) + T_{\lvert k-l\rvert}(y)\right) \\
&= \frac{1}{2}\left(Q_{k+l}(x) + Q_{\lvert k-l\rvert}(x)\right).
\end{align*}
Thus, the result follows from the standard multiplication formula for Chebyshev polynomials; see, e.g., \cite[\S 2.2]{olv}.
\end{proof}

\section{Variable transforms}\label{sec:vt}
So far, we have left the variable transform unspecified. In this section, we instantiate the proposed framework using algebraic and exponential transforms.

\subsection{Algebraic transforms}
\begin{figure}[t!]
\centering
\subfloat[]{\label{fig:fa}
  \includegraphics[width=0.323\linewidth]{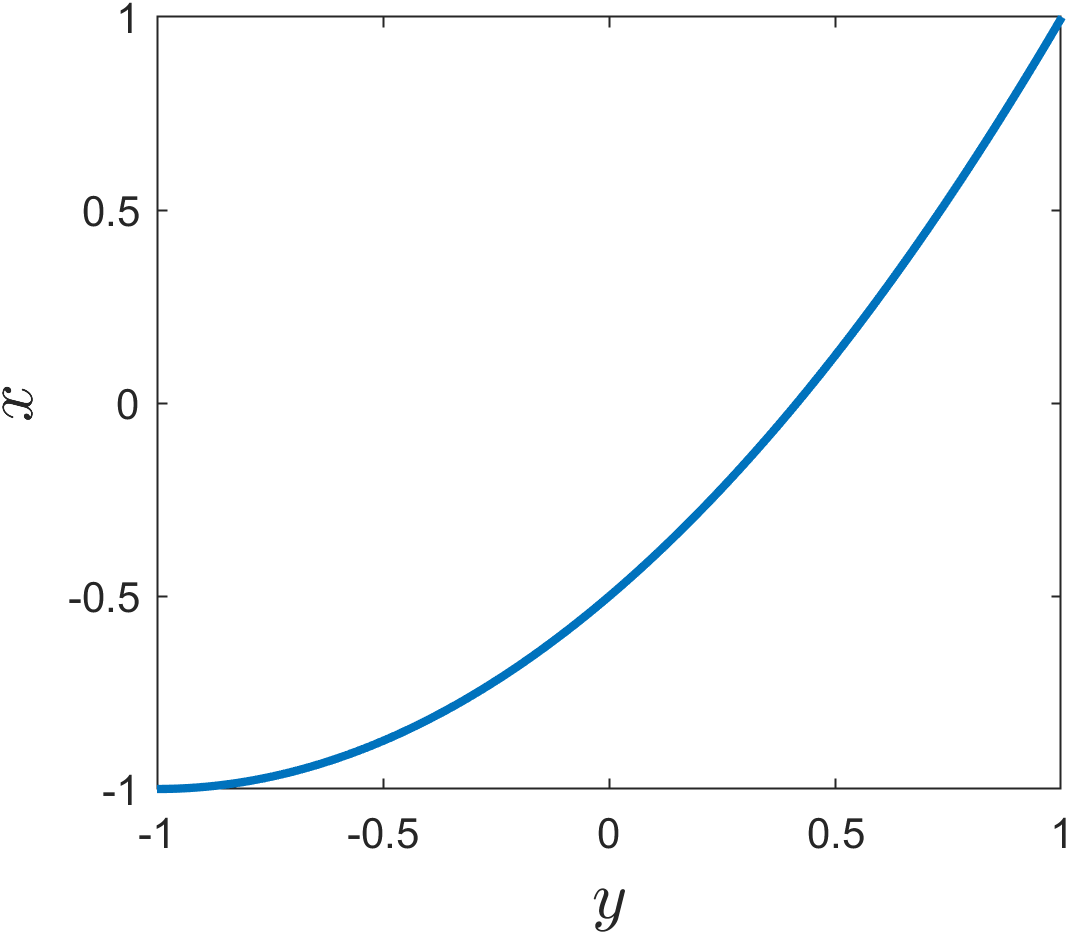}
}
\hfill
\subfloat[]{\label{fig:jfp}
  \includegraphics[width=0.6\linewidth]{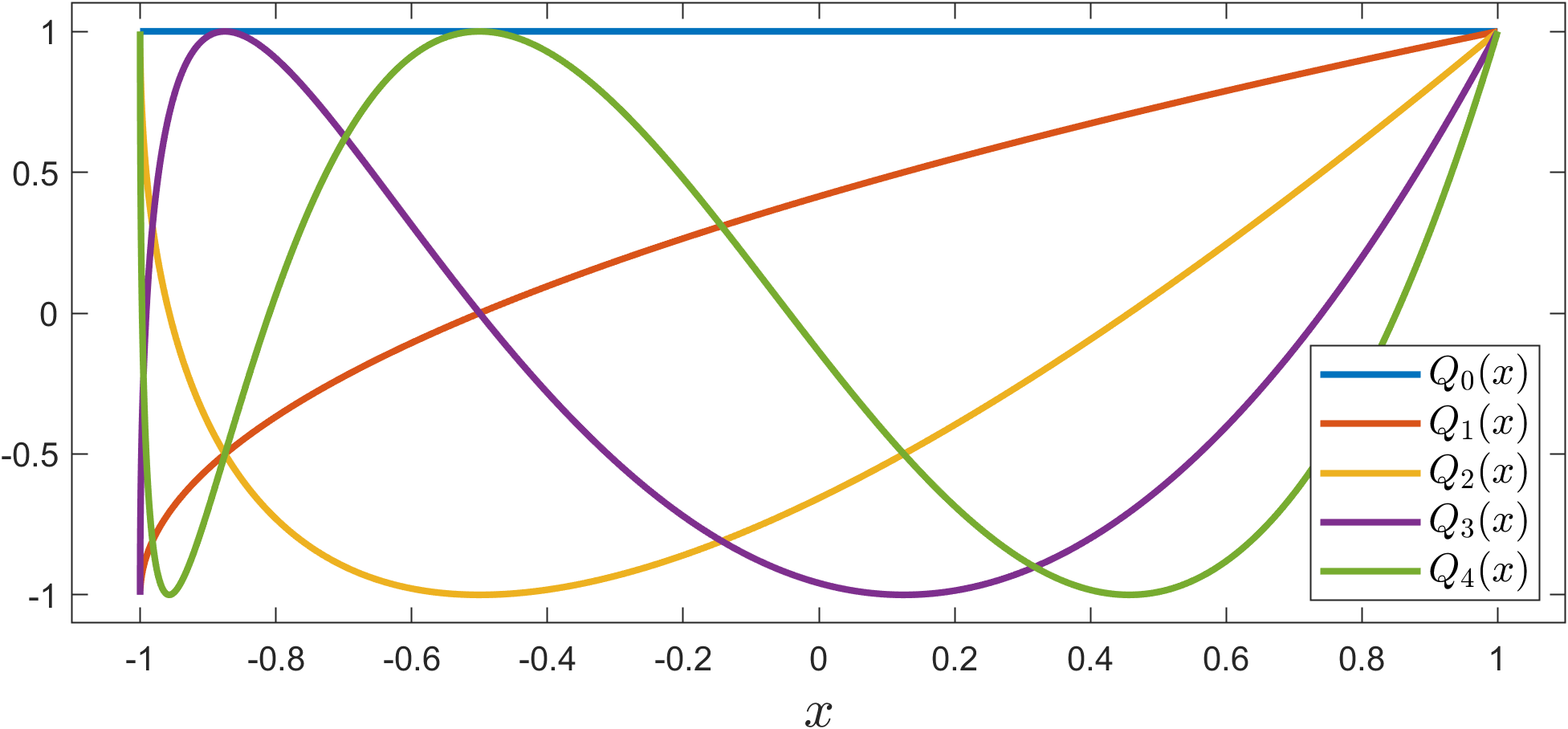}
}
\caption{The algebraic transform \cref{fa} for $\alpha = 0$ and $\beta = 1/2$: (a) the forward transform \cref{fa} and (b) the first five JFPs.}\label{fig:algebraic}
\end{figure}

By composing classical orthogonal polynomials on $[-1,1]$ with a fractional power of a polynomial that maps $[-1,1]$ onto itself, we obtain the so-called Jacobi fractional (or M\"{u}ntz) polynomials. For example, in \cite{liu} the algebraic transform \cref{algebraic} is composed with the Chebyshev polynomials to obtain the Chebyshev version of the JFP \cref{jfp} for the left-sided FIO \cref{fiol}. For \cref{algebraic}, the forward mapping is
\begin{align}
x = \psi(y) = 2\left(\frac{1+y}{2}\right)^{\frac{1}{\beta}} - 1, \label{fa}
\end{align}
which is shown in \cref{fig:fa} for $\beta = 1/2$. The first five JFPs \cref{jfp} for $\alpha = 0$ and $\beta = 1/2$ are shown in \cref{fig:jfp}. Substituting \cref{fa} into \cref{G} gives
\begin{align*}
G(y,t) = 2^{\mu(1-\frac{1}{\beta})}(1+y)^{\frac{\mu}{\beta}} \left(\frac{1-(1-t)^{\frac{1}{\beta}}}{t}\right)^{\mu} = \sigma_1 f_1(y) g_1(t),
\end{align*}
where $\sigma_1 = 1$ and
\begin{align*}
f_1(y) = 2^{\mu(1-\frac{1}{\beta})}(1+y)^{\frac{\mu}{\beta}}, \qquad
g_1(t) = \left(\frac{1-(1-t)^{\frac{1}{\beta}}}{t}\right)^{\mu}.
\end{align*}
Thus, it follows from \cref{psin} that $\varphi_n(y) = (1+y) f_1(y)\varphi_n^1(y)$ and
\begin{align*}
\varphi_n^1(y)
= \int_{0}^{1} t^{\mu} U_{n-1}\left(y - (1+y)t\right) \left(\frac{1-(1-t)^{\frac{1}{\beta}}}{t}\right)^{\mu} \md t,
\end{align*}
which satisfies the three-term recurrence relation \cref{rec}.

By the chain rule, we have
\begin{align}
\frac{\md}{\md y} \varphi_n^1(y)
= \beta\left(\frac{1+y}{2}\right)^{1-\frac{1}{\beta}} \frac{\md}{\md x} \bigl(\varphi_n(y)\bigr) - \frac{\varphi_n^1(y)}{1+y}. \label{chain}
\end{align}
Substituting \cref{chain} into \cref{rec}, we arrive at
\begin{align*} 
& (n+1)\left(\frac{1+x}{n+1}\frac{\md}{\md x} - \beta\right)\Bigl((1+y)\varphi_{n+1}^1(y)\Bigr) \\ &= (n-1)\left(\beta+\frac{1+x}{n-1}\frac{\md}{\md x}\right)\Bigl((1+y)\varphi_{n-1}^1(y)\Bigr) + 2\beta n(1+y)\varphi_n^1(y), 
\end{align*}
which, after algebraic manipulation, reduces to the three-term recurrence relation given in \cite[Theorem 2.1]{liu}. The remainder of the construction follows \cite[\S 2]{liu} or \cref{sec:left}.

The spectral approximations based on the algebraic transform are typically simple and are easy to implement. More importantly, they lead to stable and fast spectral methods for fractional calculus whenever they are applicable. Two disadvantages of the algebraic transform are readily observed: (1) it can only handle functions with a singularity at one endpoint, not both; (2) it does not allow spectral approximations to sums of multiple FIOs whose fractional orders do not satisfy the compatibility conditions for \cref{fie}; we refer to such fractional orders as incompatible. When the JFP spectral method is not applicable, the algebraic transform can be replaced by exponential transforms.

\subsection{Exponential transforms}
The limitations of algebraic transforms, and consequently those of the JFP spectral methods, can be overcome by a double-sided exponential transform. We consider, as an example, a rescaled double-exponential transform, i.e.,
\begin{subequations}\label{de}
\begin{align}
x = \psi(y) = \tanh\!\left(\frac{\pi}{2}\sinh(\omega y)\right), \qquad y \in (-\infty, \infty), \label{fde}
\end{align}
and its inverse
\begin{align}
y = \psi^{-1}(x) = \frac{1}{\omega} \arcsinh\!\left(\frac{2}{\pi} \arctanh(x)\right), \qquad x \in (-1,1). \label{ide}
\end{align}
\end{subequations}
These transforms differ from the standard double-exponential transforms \cite{tak} by the presence of the scaling factor $\omega$. Apparently, $\psi(y)$ does not map $[-1, 1]$ onto itself. We take advantage of $\omega$ to make it numerically so.

\begin{figure}[t!]
\centering
\subfloat[]{
  \includegraphics[width=0.323\linewidth]{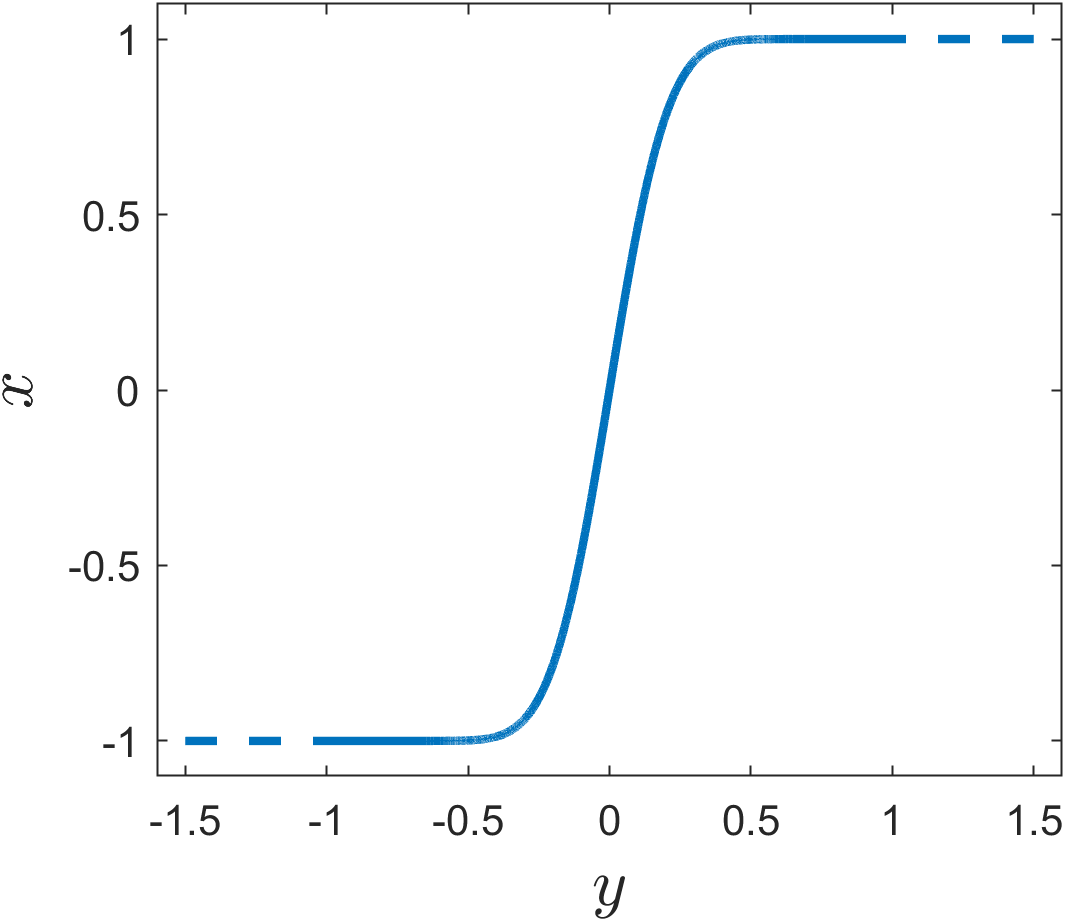}
}
\hfill
\subfloat[]{
  \includegraphics[width=0.6\linewidth]{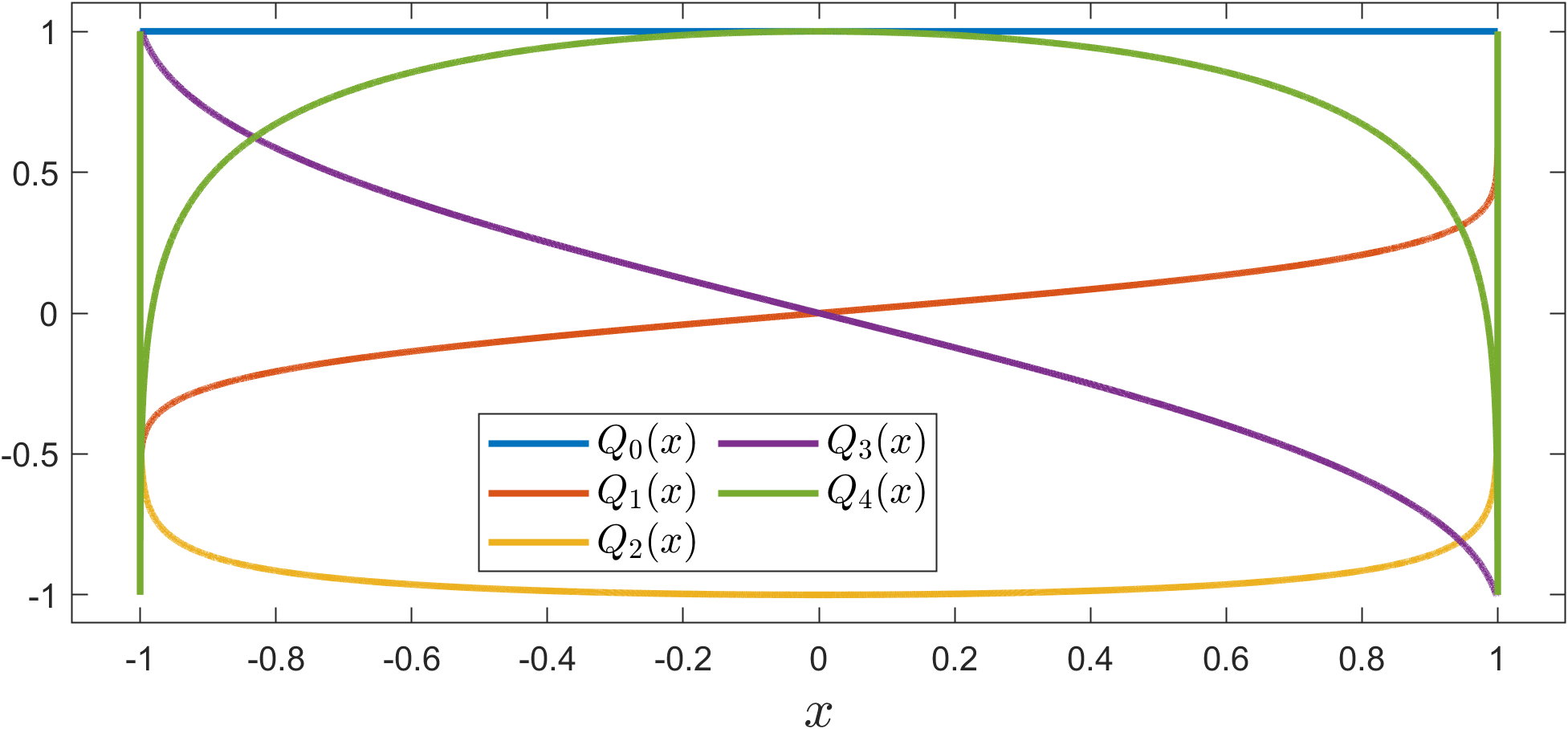}
}
\caption{The double-exponential transform: (a) the forward transform \cref{fde} with $\omega = 3.154$, chosen so that \cref{beta1} holds; (b) the first five DETCPs. These plots are intended for comparison with \cref{fig:algebraic}.}
\end{figure}

\subsubsection{Determining $\omega$}
Noting that $\lim_{\omega \to \infty} \psi(\pm 1) = \pm 1$, we choose a finite $\omega$ such that $\psi(\pm 1)$ approximates $\pm 1$ with an error no greater than machine epsilon, i.e.,
\begin{align}
\left| \psi(\pm 1) \mp 1 \right| \le \ema. \label{beta1}
\end{align}
In double-precision floating-point arithmetic, $\omega = 3.154$ is roughly the smallest value that satisfies this condition.

Suppose that $f(x)$ is a function defined on $[-1,1]$ that exhibits a weak singularity of order $\gamma$ at $x = -1$, i.e., $f(x) \approx (1+x)^{\gamma}$ as $x \to -1$. In addition to \cref{beta1}, we require that when the transplanted function $f(\psi(y))$ is evaluated at and near $y=-1$, the computed value deviates from the true value by no more than $\mathcal{O}(\ema \|f\|_{\infty})$. That is,
\begin{align}
f\bigl(\psi(-1)\bigr) - f(-1) \approx \bigl(1+\psi(-1)\bigr)^{\gamma} \le \ema \|f\|_{\infty}. \label{beta2}
\end{align}
Together with the requirement $\omega \ge 3.154$, we obtain
\begin{subequations}\label{beta3}
\begin{align}
\omega = \max\left(3.154,\,
\operatorname{arcsinh}\!\left(\frac{1}{\pi}\log\!\left(\frac{2}{(\ema\|f\|_{\infty})^{1/\gamma}} - 1\right)\right)\right). 
\end{align}
In practice, the logarithmic term is evaluated as
\begin{align}
\log\!\left(\frac{2}{(\ema\|f\|_{\infty})^{1/\gamma}} - 1\right)
= \log 2 - \frac{\log(\ema\|f\|_{\infty})}{\gamma}
+ \log\!\left(1 - \frac{(\ema\|f\|_{\infty})^{1/\gamma}}{2}\right)
\end{align}
\end{subequations}
to avoid floating-point overflow. By symmetry, an analogous analysis at the other endpoint leads to the same requirement.

With such a choice of $\omega$, we assume henceforth that $\psi(\pm 1) = \pm 1$, and refer to the corresponding TCPs as double-exponential transplanted Chebyshev polynomials (DETCPs).

In addition, we evaluate the inverse transform as
\begin{align*}
\psi^{-1}(x) = \min\!\left(1,\max\!\left(-1,\frac{1}{\omega}\arcsinh\!\left(\frac{2}{\pi}\arctanh(x)\right)\right)\right),
\end{align*}
to ensure that the returned value lies in $[-1,1]$.

\cref{fig:beta} shows the error in the DETCP approximations to $f(x) = (1+\psi(y))^{\mu}$ as $\omega$ increases for various $\mu$. The dashed vertical lines, which indicate the values of $\omega$ determined by \cref{beta3}, pinpoint the threshold beyond which the errors reach the level of machine precision.

\begin{figure}[t!]
\centering
\subfloat[]{\includegraphics[width=0.47\linewidth]{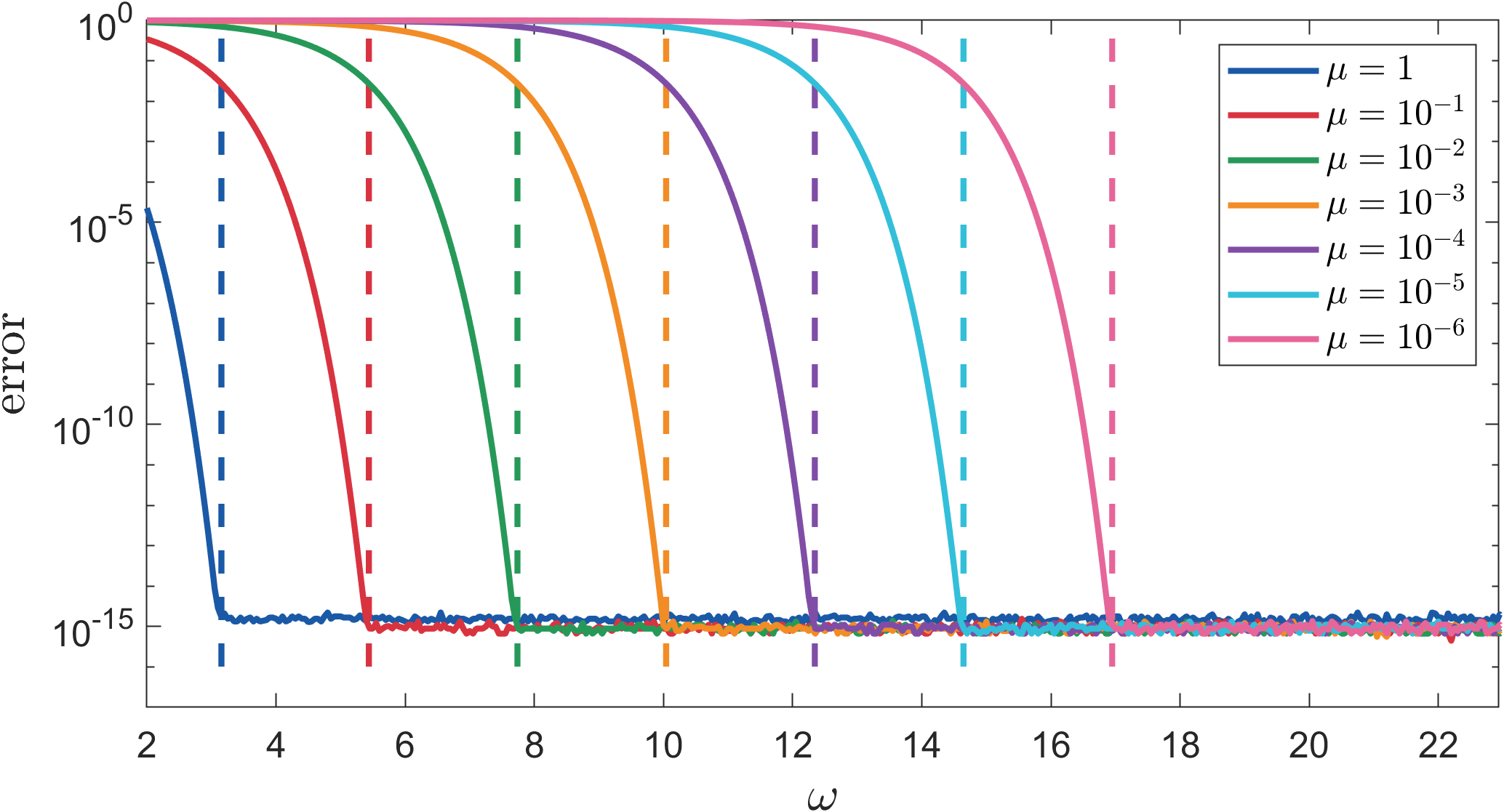}\label{fig:beta}}
\hfill
\subfloat[]{\includegraphics[width=0.47\linewidth]{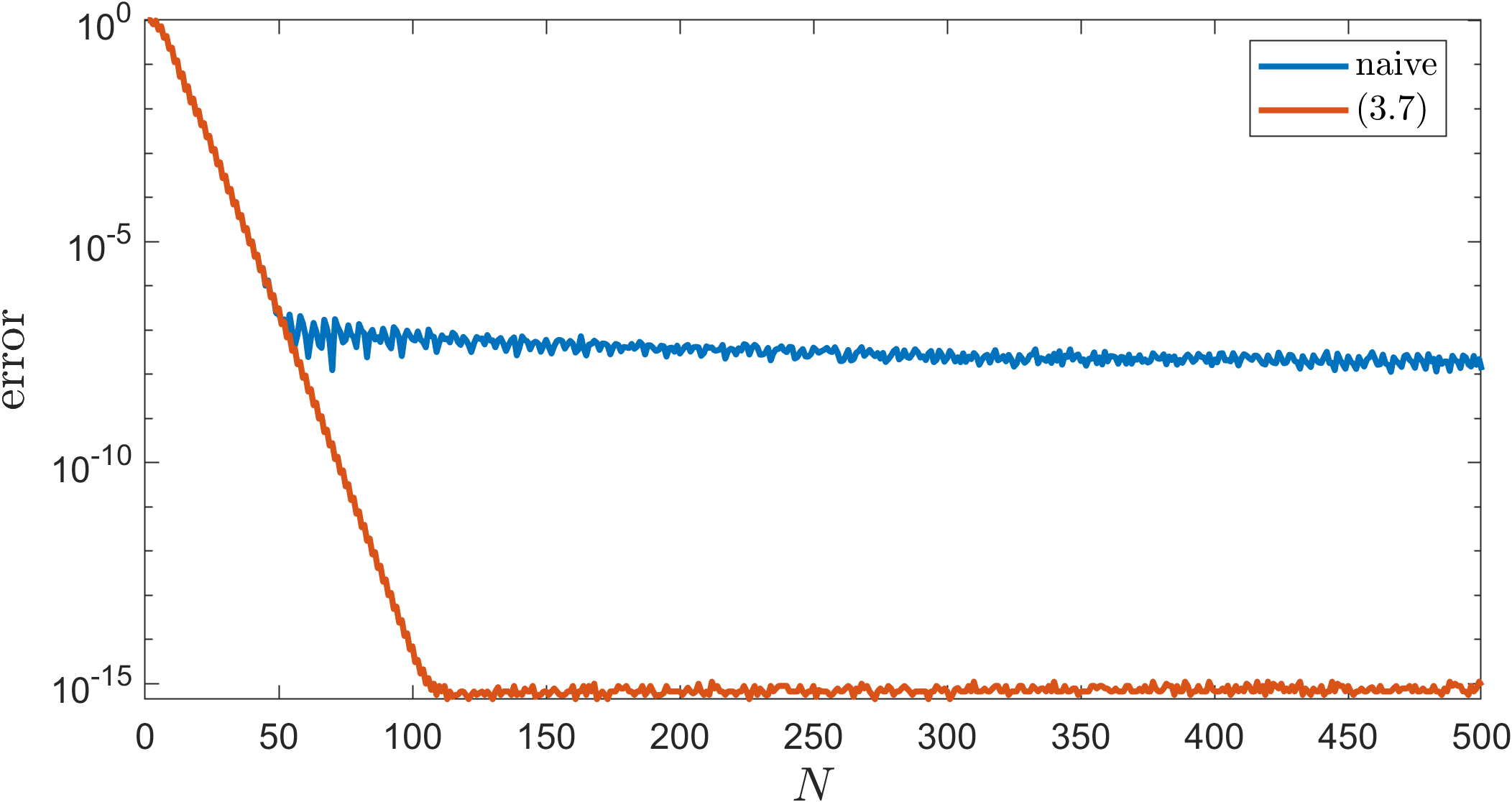}\label{fig:approx1}}\\
\subfloat[]{\includegraphics[width=0.47\linewidth]{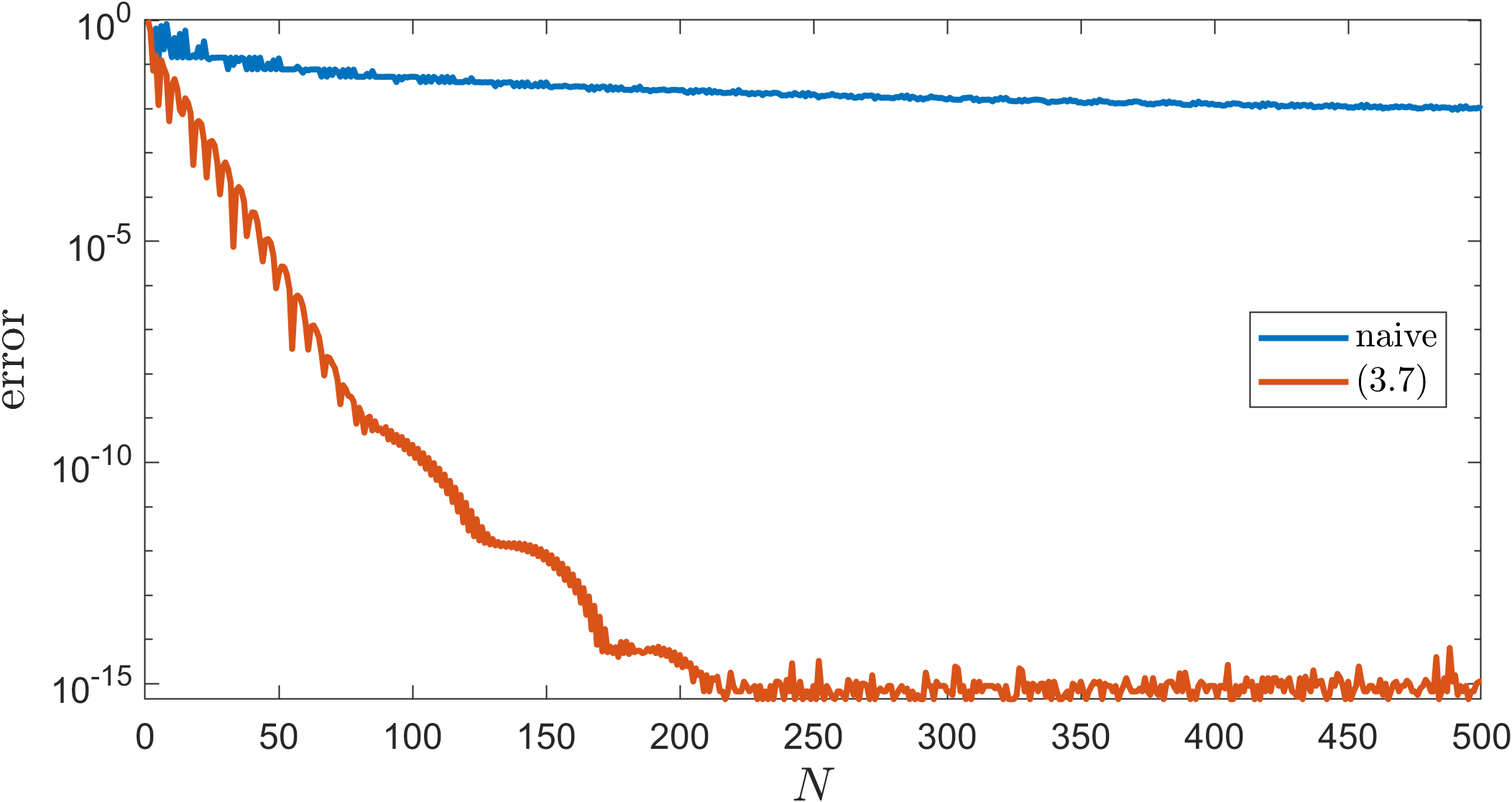}\label{fig:approx2}}
\hfill
\subfloat[]{\includegraphics[width=0.47\linewidth]{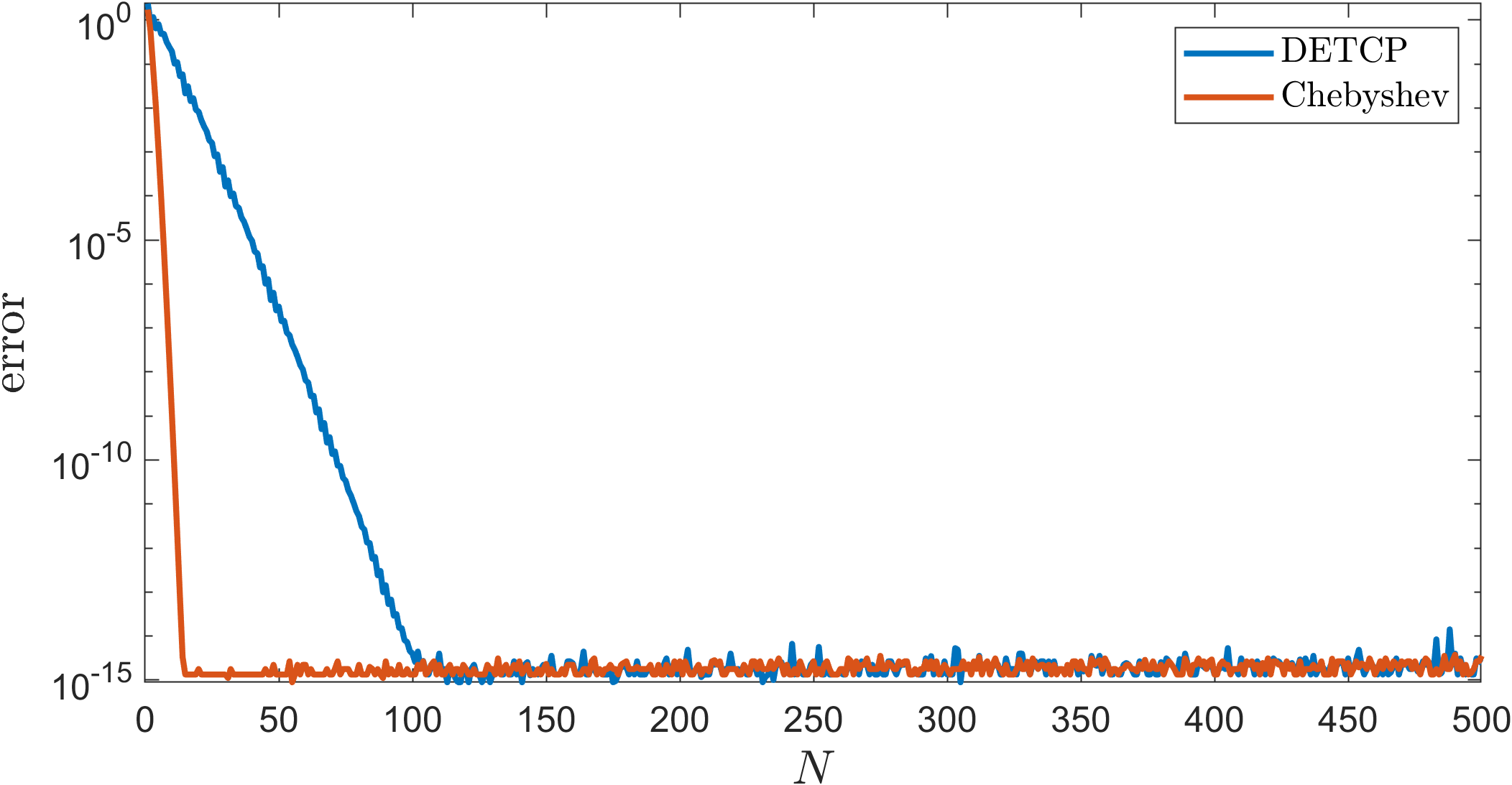}\label{fig:approx3}}
\caption{(a) The parameter $\omega$ versus the errors in the computed values of $(1+\psi(y))^{\mu}$ for various fractional order $\mu$. The vertical line marks the values determined by \cref{beta3}. Error in the DETCP expansion of (b) $f(x) = (1-x)^{\frac{1}{3}}(1+x)^{\frac{1}{2}}$, (c) $(1+x)^{10^{-5}}$, and (d) $f(x) = e^x$. For these functions, $\omega = 4.238, 14.646, 3.154$, respectively, and the errors are estimated by evaluating the pointwise error at $2\times 10^4$ evenly spaced points in $[-1, 1]$.} \label{fig:approx}
\end{figure}

\subsubsection{Implementation details}
We now discuss implementation details for constructing the double-exponential-based spectral approximation (DESA).

\paragraph{Evaluation of key quantities}
We have to evaluate $(1 \pm \psi(y))^{\kappa}$ when constructing the DETCP expansion of the $(1 \pm \psi(y))^{\mu}$ term in \cref{IQnl} and those of the variable coefficients or right-hand side functions that involve fractional-power of $1 \pm \psi(y)$. A naive evaluation of $(1 \pm \psi(y))^{\kappa}$ may lead to severe cancellation errors. To address this issue, we evaluate the logarithm of $1 \pm \psi(y)$ as
\begin{align}
\log\left(1 \pm \psi(y)\right)
= \log 2 - \log\left(1 + \exp\left(\mp \pi \sinh(\omega y)\right)\right), \label{1pmphi}
\end{align}
then multiply by $\kappa$ and exponentiate.

\paragraph{DETCP expansion of a given function}
We often need to construct the DETCP expansion of a given function, for example, the variable coefficients $a_k(x)$ and $b_k(x)$ and the right-hand side function in \cref{fie}. Since expressing a given function $f(x)$ in terms of the basis $\{Q_n(x)\}_{n=0}^{N}$ can be achieved by interpolating $f(\psi(y))$ at Chebyshev points on $[-1,1]$, the DETCP expansion can be computed at essentially the same cost as the standard Chebyshev interpolation, namely $\mathcal{O}(N \log N)$ using the fast cosine transform.

\cref{fig:approx1,fig:approx2,fig:approx3} demonstrate the approximation of three functions: $f(x) = (1-x)^{\frac{1}{3}}(1+x)^{\frac{1}{2}}$, $f(x) = (1+x)^{10^{-5}}$, and $f(x) = e^{x}$ on $[-1,1]$. These examples represent, respectively, a typical fractional-power function with singularities at both endpoints, a function with an extremely weak singularity, and a smooth (indeed, analytic) function. 

On the one hand, if $1 \pm \psi(y)$ is evaluated naively, the exponential convergence of the approximation error ceases prematurely. With the stabilization technique in \cref{1pmphi}, the error in the DETCP approximation decays to machine precision. On the other hand, since $e^{x}$ is smooth, its DETCP approximation generally requires more basis functions than the standard Chebyshev approximation, as expected.

\paragraph{Bivariate Chebyshev expansion of $G(y, t)$}
For exponential transforms, particularly the double-exponential transform, $G(y, t)$ is usually too complicated to allow separation of variables in $y$ and $t$, in contrast to the algebraic transform, for which such a separation is possible. Hence, we resort to numerical approximation to obtain \cref{Glr}. However, direct evaluation of \cref{G} by simply substituting \cref{fde} usually results in severe cancellation errors, particularly for small $t$. To address this, we rewrite
\begin{align}\label{Gl}
G(y, t)& = \exp\Bigg( \mu \bigg( \log\left(\frac{\pi\omega(1+y)}{2}\right) + \log\left(\cosh\left(\frac{\omega(y+\xi_1)}{2}\right)\right) + \log\left(\operatorname{sinhc}(\xi_3)\right) \nonumber \\
 &+ \log\left(\operatorname{sinhc}(\xi_2)\right) - \log\left(\cosh\left(\frac{\pi}{2}\sinh(\omega y)\right)\right) - \log\left(\cosh\left(\frac{\pi}{2}\sinh(\omega \xi_1)\right)\right) \bigg) \Bigg),
\end{align}
where
\begin{align*}
\xi_1  = y-(1+y)t, \qquad \xi_2 = \frac{\omega(1+y)t}{2}, \qquad \xi_3 = \pi \cosh\left(\frac{\omega(y+\xi_1)}{2}\right)\sinh(\xi_2),
\end{align*}
and $\operatorname{sinhc}(y) = \sinh(y)/y$ is the hyperbolic sinc function, with value $1$ at $y=0$. 

\begin{figure}
\centering
\subfloat[]{\label{fig:G}
  \includegraphics[width=0.44\linewidth]{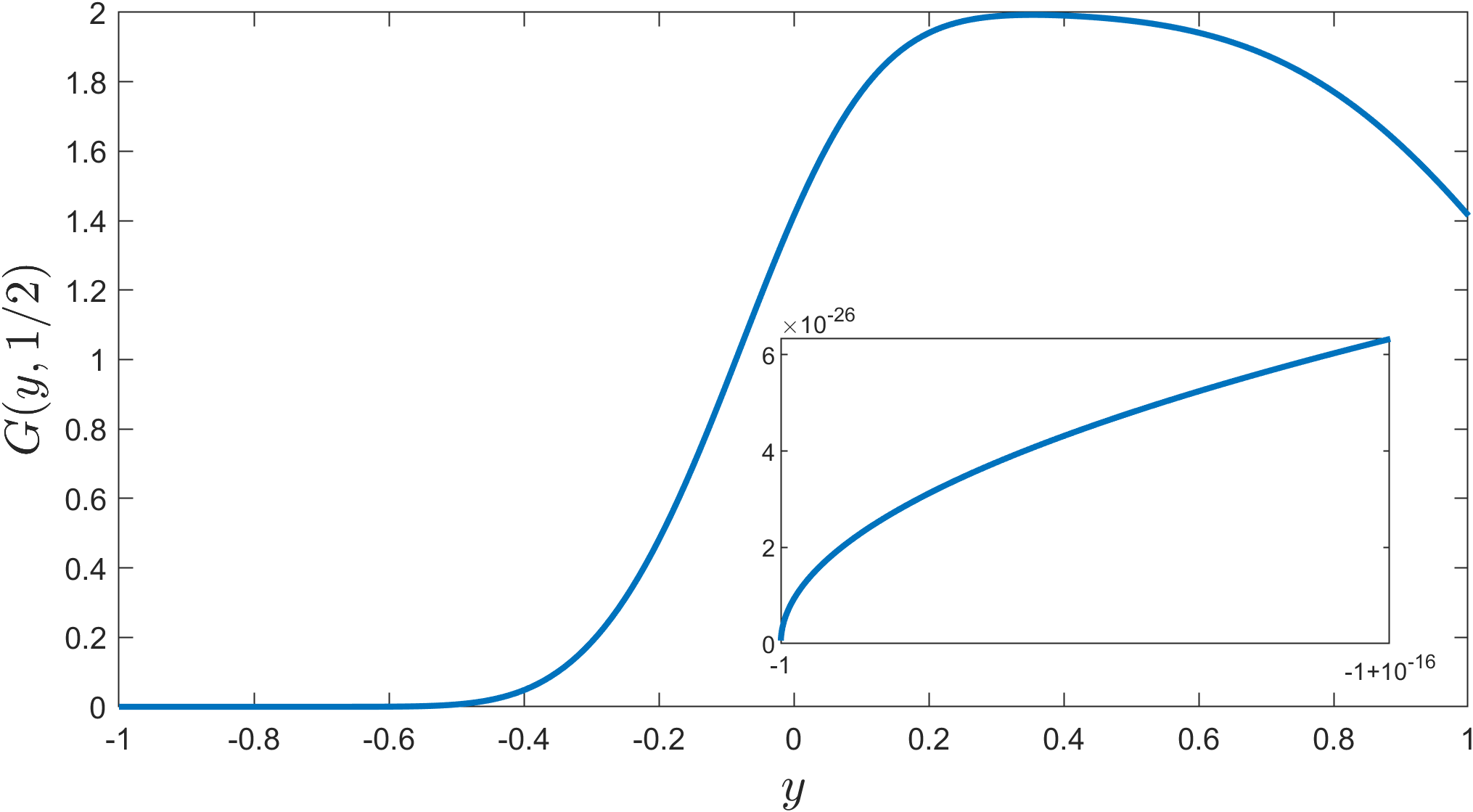}
}
\hfill
\subfloat[]{\label{fig:G0}
  \includegraphics[width=0.465\linewidth]{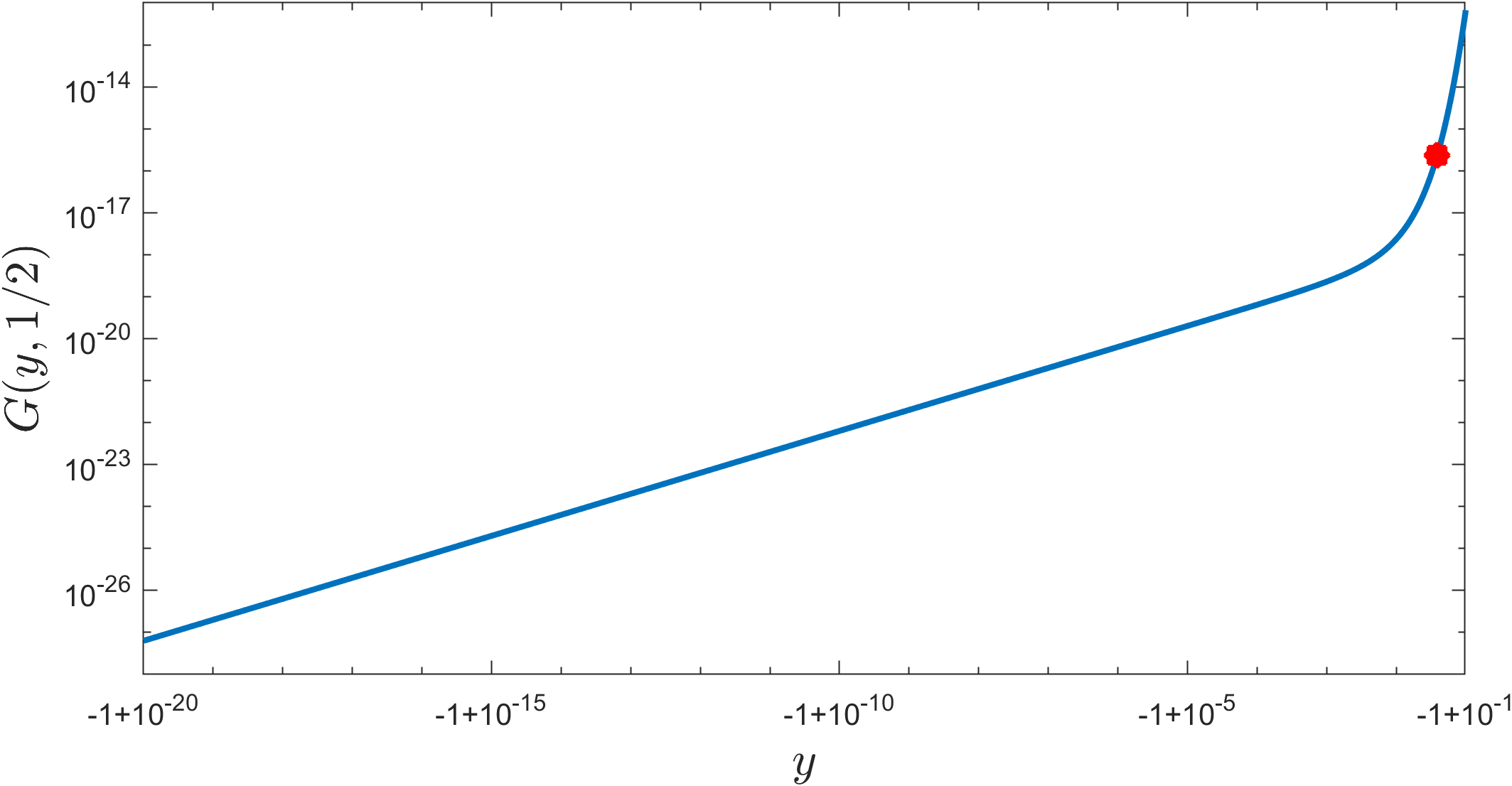}
}
\caption{(a) $G(y, 1/2)$ is smooth except at $y = -1$, where it has a weak singularity of order $(1+y)^{\mu}$. (b) A log--log plot of $G(y, t)$ near $y = -1$.}
\end{figure}

In fact, $G(y, t)$ is smooth except for a weak singularity at $y = -1$, because $\partial G(y,t) / \partial y \to \infty$ as $y \to -1$. \cref{fig:G} shows a plot of $G(y,1/2)$ for $\mu = 1/2$ and $\omega = 4$, together with a close-up near $y = -1$. This singularity does not, however, prevent us from obtaining a global bivariate Chebyshev approximation on $(y, t) \in [-1, 1]\times [0, 1]$. To see why, we expand $G(y,t)$ about $y=-1$ and obtain
\begin{align}
G(y, t)=\left(\psi^{\prime}(-1)\right)^\mu(1+y)^\mu(1+\mO(1+y)), \label{Gasymptotic}
\end{align}
as $y \to -1$. Since $\psi^{\prime}(-1) \sim 2 \pi \omega \cosh (\omega) \exp (-\pi \sinh \omega)$, it follows from the fact that $1+\psi(-1) \sim 2 \exp (-\pi \sinh \omega)$ and \cref{beta2} that
\begin{align*}
\left(\psi^{\prime}(-1)\right)^\mu \lesssim(\pi \omega \cosh \omega)^\mu \|f\|_{\infty} \ema,
\end{align*}
or, simply $\left(\psi^{\prime}(-1)\right)^\mu = \mO(\ema)$. Therefore, for $y$ in a sufficiently large neighborhood of $-1$, \cref{Gasymptotic} implies that $G(y,t)$ is of magnitude $\mO(\ema)$ or smaller, and is thus negligible compared with its values elsewhere in the domain. Consequently, the singularity is effectively invisible to a global approximation procedure. This can be seen from \cref{fig:G0}, where $G(y,1/2)$ is plotted on a log--log scale. Up to approximately $y=-0.961$ (the horizontal coordinate of the red dot), $G(y,1/2)$ remains negligible compared with its overall magnitude, which is about $2.0$. The vertical coordinate of the red dot marks the value $2\ema$, below which $G(y,1/2)$ is effectively treated as zero in numerical approximation.

Since $G(y,t)$ is numerically smooth, it can be well approximated by a bivariate Chebyshev expansion of the form \cref{Glr}, where $f_j(y)$ and $g_j(t)$ are Chebyshev series
\begin{align*}
f_j(y) = \sum_{k=0}^K a_{kj} T_k(y), ~~~~g_j(t) = \sum_{l=0}^L b_{lj} T_l(2t-1).
\end{align*}
Such a low-rank bivariate Chebyshev expansion can be constructed using adaptive cross approximation (ACA) \cite{tow} at a cost of $\mO(r^3+r^2(K+L))$ flops. Our extensive experiments show that, for most problems, it suffices to take $K=L=100$ and $r = 30$, except in the case of extremely small $\mu$; see \cref{sec:abel}.

For the right-sided FIO, the corresponding cancellation-free expression is
\begin{align}
G(y,t) &= \exp\Bigg( \mu \bigg( \log\left(\frac{\pi\omega(1-y)}{2}\right) + \log\left(\cosh\left(\frac{\omega(y+\xi_1)}{2}\right)\right) + \log\left(\operatorname{sinhc}(\xi_3)\right) \nonumber\\
&+ \log\left(\operatorname{sinhc}(\xi_2)\right) - \log\left(\cosh\left(\frac{\pi}{2}\sinh(\omega y)\right)\right) - \log\left(\cosh\left(\frac{\pi}{2}\sinh(\omega \xi_1)\right)\right) \bigg) \Bigg), \label{Gr}
\end{align}
where
\begin{align}
\xi_1 = y+(1-y)t, \quad \xi_2 = \frac{\omega(1-y)t}{2}, \quad \xi_3 = \pi \cosh\left(\frac{\omega(y+\xi_1)}{2}\right)\sinh(\xi_2).
\end{align}

\paragraph{Initial values and boundary condition for $\varphi_n^j(y)$}
Let 
\begin{align}
h_l = \int_{0}^{1} t^{\mu} T_l(2t-1) \md t, \label{moments}
\end{align}
which can be computed in $\mathcal{O}(L)$ flops using Gauss--Jacobi quadrature. The following lemma expresses the first three $\varphi_n^j(y)$ in terms of $\{b_{lj}\}_{l = 0}^{L}$ and $\{h_l\}_{l = 0}^{L+1}$.

\begin{lemma}\label{lem:psink012}
For all $j$,
\begin{align*}
\varphi_0^j(y) &= 0, \qquad 
\varphi_1^j(y) = \int_{0}^{1} t^{\mu} g_j(t) \md t = \sum_{l=0}^{L} h_l b_{lj}, \\
\varphi_2^j(y) &= 2\int_{0}^{1} t^{\mu} \left(y - (y+1)t\right) g_j(t) \md t = 2\left(\sum_{l=0}^{L+1} h_l (b_{lj} - \tilde{b}_{lj})\right) T_1(y) - 2\sum_{l=0}^{L+1} h_l \tilde{b}_{lj},
\end{align*}
where 
\begin{align*}
\tilde{b}_{0j} = \frac{1}{4}\left(2b_{0j} + b_{1j}\right), ~~
\tilde{b}_{1j} = \frac{1}{4}\left(2b_{0j} + 2b_{1j} + b_{2j}\right),~~
\tilde{b}_{lj} = \frac{1}{4}\left(b_{l-1,j} + 2b_{lj} + b_{l+1,j}\right)
\end{align*}
for $2\le l\le L+1$. Here, $b_{lj}=0$ for $l > L$.
\end{lemma}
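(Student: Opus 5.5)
The plan is to read off $\varphi_0^j$, $\varphi_1^j$ and $\varphi_2^j$ directly from the definition of the moment $\varphi_n^j(y)=\int_0^1 t^{\mu}U_{n-1}(y-(1+y)t)g_j(t)\,\md t$, using $U_{-1}=0$, $U_0=1$ and $U_1(s)=2s$. This gives at once $\varphi_0^j=0$, and $\varphi_1^j=\int_0^1 t^\mu g_j(t)\,\md t$. Substituting the Chebyshev series $g_j(t)=\sum_{l=0}^L b_{lj}T_l(2t-1)$ and using the definition \cref{moments} of $h_l$ yields $\varphi_1^j=\sum_l h_l b_{lj}$.

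For $\varphi_2^j$, write $y-(1+y)t = y(1-t) - t$. This splits the integral as
\begin{align*}
\varphi_2^j(y) = 2y\int_0^1 t^\mu (1-t) g_j(t)\,\md t - 2\int_0^1 t^{\mu}\, t\, g_j(t)\,\md t.
\end{align*}
The key step is to express $t\,g_j(t)$ as a Chebyshev series in $T_l(2t-1)$, so that both integrals reduce to combinations of the $h_l$.

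Put $s=2t-1$, so $t=(1+s)/2$, and use the Chebyshev multiplication rule $sT_l = \tfrac12(T_{l+1}+T_{|l-1|})$, i.e.\ \cref{thm:M} with the series for $s$. Then $\tfrac{1+s}{2}\sum_l b_{lj}T_l(s)=\sum_{l=0}^{L+1}\tilde b_{lj}T_l(s)$. Collecting coefficients gives:
\begin{itemize}
\item for $l\ge 2$: $\tilde b_{lj}=\tfrac14(b_{l-1,j}+2b_{lj}+b_{l+1,j})$;
\item for $l=1$: $T_0$ feeds into $T_1$ with weight $1$ rather than $\tfrac12$, so $\tilde b_{1j}=\tfrac14(2b_{0j}+2b_{1j}+b_{2j})$;
\item for $l=0$: only $b_{0j}$ and half of $b_{1j}$ contribute, so $\tilde b_{0j}=\tfrac14(2b_{0j}+b_{1j})$.
\end{itemize}
All of these use the convention $b_{lj}=0$ for $l>L$. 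It follows that $\int_0^1 t^{\mu+1}g_j\,\md t=\sum_{l=0}^{L+1}h_l\tilde b_{lj}$ and $\int_0^1 t^\mu(1-t)g_j\,\md t=\sum_{l=0}^{L+1}h_l(b_{lj}-\tilde b_{lj})$.

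Substituting these and writing $y=T_1(y)$ gives exactly the stated formula for $\varphi_2^j$. The only delicate point is the index bookkeeping at $l=0,1$ in the multiplication rule, where $T_{|l-1|}$ folds back. Checking these boundary coefficients carefully is the main (mild) obstacle; everything else is direct substitution.
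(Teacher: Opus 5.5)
Your proof is correct and follows the paper's argument: you expand $t\,g_j(t)$ in $\{T_l(2t-1)\}$ via $t=(1+T_1(2t-1))/2$ and the Chebyshev product rule, then integrate term by term against $t^{\mu}$ using the moments $h_l$. The only cosmetic difference is that the paper first writes $\varphi_2^j(y)=2y\int_0^1 t^{\mu}g_j(t)\,\md t-2(y+1)\int_0^1 t^{\mu+1}g_j(t)\,\md t$ and then regroups, whereas your splitting $y-(1+y)t=y(1-t)-t$ gives the stated form directly.
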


\begin{proof}
We omit the proofs for $\varphi_0^j(y)$ and $\varphi_1^j(y)$. By definition, 
\begin{align}
\varphi_2^j(y) = 2y\int_{0}^{1} t^{\mu} g_j(t) \md t - 2(y+1) \int_{0}^{1} t^{\mu+1}g_j(t) \md t. \label{psij2}
\end{align}
Since $t = ( T_1(2t-1)+1 )/2$, 
\begin{align*}
t\, g_j(t)
&= \frac{1}{2}\left(\sum_{l=0}^{L} b_{lj} T_l(2t-1) + \frac{1}{2}\sum_{l=0}^{L}b_{lj} \left(T_{l+1}(2t-1)+T_{|l-1|}(2t-1)\right)\right) \\
&= \sum_{l=0}^{L+1}\tilde{b}_{lj} T_l(2t-1),
\end{align*}
where $\tilde{b}_{lj}$ are given above. The integrals in \cref{psij2} can then be evaluated as
\begin{align*}
\int_{0}^{1} t^{\mu} g_j(t) \md t = \sum_{l=0}^{L} h_l b_{lj}, \qquad
\int_{0}^{1} t^{\mu+1} g_j(t) \md t = \sum_{l=0}^{L+1} h_l \tilde{b}_{lj},
\end{align*}
which, upon substitution into \cref{psij2}, yields the expression for $\varphi_2^j(y)$.
\end{proof}

For right-sided FIOs, the first three terms of $\{\varphi_n^j(y)\}_{n = 0}^N$ are
\begin{align}
\varphi_0^j(y) = 0, ~ \varphi_{1}^{j}(y) = \sum_{l=0}^{L} h_l b_{lj}, ~ \varphi_{2}^{j}(y) = 2\left(\sum_{l=0}^{L+1} h_l (b_{lj} - \tilde{b}_{lj})\right) T_1(y) + 2\sum_{l=0}^{L+1} h_l \tilde{b}_{lj}, \label{psink012r}
\end{align}
where $h_l$ and $\tilde{b}_{lj}$ are defined as in \cref{lem:psink012}.

For the boundary condition, the integrals appearing on the right-hand sides of \cref{psink1,psink1r} can be computed in $\mathcal{O}(n)$ flops using Gauss--Jacobi quadrature.

\paragraph{Detailed description of the construction algorithm}
\begin{algorithm}[t!]
\caption{Algorithm for constructing the spectral approximation to a left- or right-sided FIO with DETCPs.}\label{alg:de}
\begin{algorithmic}
\STATE{Determine $\omega$ by \cref{beta3}.\hfill $\rhd \ \mO(1)$}
\STATE{Compute the bivariate Chebyshev expansion of $G(y,t)$ using \cref{Gl} or \cref{Gr}.}
\STATE{\hfill$\rhd\ \mO(r^3+r^2(K+L))$}
\STATE{Compute the DETCP coefficients of $\left(1\pm \psi(y)\right)^{\mu}$ via \cref{1pmphi}.\hfill $\rhd\ \mO(N\log N)$}
\STATE{Compute the moments $h_l$ defined in \cref{moments} for $l = 0, 1, \dots, L+1$.\hfill $\rhd\ \mO(L^2)$}
\FOR{$j = 1$ to $r$}
  \STATE{Compute $\varphi_n^j$ for $n = 0, 1, 2$ using \cref{lem:psink012} or \cref{psink012r}.\hfill $\rhd\ \mO(L)$}
  \FOR{$n = 3$ to $N$}
    \STATE {Compute $\varphi_n^j(1)$ via \cref{psink1} or $\varphi_n^j(-1)$ via \cref{psink1r}. \hfill $\rhd\ \mO(n)$}
    \STATE {Solve \cref{rec} or \cref{recr} for the coefficients of $\varphi_n^j(y)$. \hfill $\rhd\ \mO(n)$\hspace*{0.3\algorithmicindent}}
  \ENDFOR
\ENDFOR
\FOR{$n = 0$ to $N$}
  \STATE{Compute the coefficients of $\varphi_n(y)$ following \cref{psin} or \cref{psinr}.\hfill $\rhd\ \mO(rKN)$}
  \STATE{Compute the coefficients of ${}_{-1}\mI_x^{\mu}[Q_n](\psi(y))$ following \cref{IQnl} or ${}_{x}\mI_{1}^{\mu}[Q_n](\psi(y))$ following \cref{IQnr}, and store them in the $n$th column of $\mA$.\hfill $\rhd\ \mO(N)$\hspace*{0.3\algorithmicindent}}
\ENDFOR
\vspace{0.5em}
\hrule
\vspace{0.5em}
\STATE{Total complexity: \hfill $\mO\Bigl(rKN^2 + L^2 + r^2(K+L) + r^3 \Bigr)$}
\end{algorithmic}
\end{algorithm}

The complete algorithm for constructing DESAs is given in \cref{alg:de}, together with the stepwise and overall computational complexities. The cost of approximating $G(y, t)$ can be regarded as a precomputation step that is performed only once and depends solely on the exponential transform and the value of $\mu$. For a moderate $\omega$, this cost constitutes a relatively negligible overhead.

\subsection{Other variable transforms}
The name ``double exponential'' in \cref{de} originates from the asymptotic behavior $(e^{\pm 2e^{\pm y}}-1)/(e^{\pm 2e^{\pm y}}+1)$ of \cref{fde} for large $y$, which converges to $\pm 1$ at a double-exponential rate as $y \to \pm \infty$. This is, of course, not the only variable transform with such behavior. For example, another transform bearing the same name is
\begin{align}
\psi(y) = \frac{e^{\pi \sinh (y)}}{1+e^{\pi \sinh (y)}},
\end{align}
which is used in investigating the resolution power of transplanted Chebyshev interpolation \cite{ric2}.

Alongside double-exponential transforms, there exist other exponential transforms that can also be applied in principle, including the single-exponential transform \cite{tak2,ric1,ric2}
\begin{align*}
\psi(y) = \tanh(y) ~~~~\text{or}~~~~ \psi(y) = \frac{e^y}{1+e^y},
\end{align*}
the IMT transform \cite{iri}
\begin{align}
\psi(y) = \displaystyle \left. \int_0^y e^{-\left(\frac{1}{s}+\frac{1}{1-s} \right)}\md s \middle/ \int_0^1 e^{-\left(\frac{1}{s}+\frac{1}{1-s}\right)}\md s \right.,
\end{align}
and the erf transform \cite{tak2}
\begin{align}
\psi(y) = \frac{2}{\sqrt{\pi}}\int_0^y e^{-s^2}\md s,
\end{align}
to name a few. When one of these transforms is incorporated into a given function, the transplanted function converges exponentially to constant values. Consequently, the transplanted function can be truncated to a finite interval, on which a polynomial-based interpolant can be constructed to approximate it.

We choose the double-exponential transform over others for two reasons. First, although a transplanted Chebyshev series based on any of these transforms approximates a given function at an exponential rate\footnote{Meanwhile, it is well known that the double-exponential, single-exponential, IMT, and erf transforms yield convergence rates of $\mO(e^{-n/\log n})$, $\mO(e^{-\sqrt{n}})$, $\mO(e^{-\sqrt{n}})$, and $\mO(e^{-n^{2/3}})$, respectively. One may wonder whether these rates contradict the statement of exponential convergence. This is not the case. These convergence rates arise from a delicate balance between the domain error (also known as truncation or endpoint error) and the interpolation error (also known as discretization or interior error). In our setting, since we work on the fixed domain $[-1, 1]$ with a fixed parameter $\omega$, the convergence rate for both single and double-exponential transforms is exponential. This is the same situation encountered in the construction phase of the \textsc{Sincfun} system \cite[\S 3]{ric1}.}, i.e., the error decays as $\mO(C^{-N})$, the constant $C$ is significantly larger for the double-exponential transform than for other transforms. In other words, for the double-exponential transform, $K$ and $L$ are much smaller than those for other mappings. Second, for the same $\mu$, the rank $r$ of the bivariate Chebyshev expansion of $G(y, t)$ is also significantly smaller. Hence, the double-exponential transform leads to a much faster construction and a lower-banded system with a much smaller bandwidth.

In principle, we can also employ one-sided single-exponential transforms, e.g., $\psi(y) = e^y$ for $y \in [-\infty, 0]$, or one-sided double-exponential transforms, e.g., $\psi(y) = e^{1-e^{-y}}$ for $y \in [-\infty, 0]$, in certain circumstances for reduced cost of approximation. However, if oscillatory behavior occurs at the endpoint other than the one where the singularity is located, it may incur even more cost than necessary. One such example is the fractional Airy equation (\cref{sec:airy}).

In \cite{adc1,adc2}, Adcock and his coauthors proposed parameterized slit-strip single- and double-exponential transforms that possess both optimal asymptotic convergence rates and resolution power. Slevinsky and Olver also proposed a modified double-exponential transform \cite{sle} that yields an improved convergence rate for approximating the transplanted functions. However, these transforms are all function-dependent and are therefore not applicable in the current context.

\section{Numerical examples}\label{sec:example}
We now demonstrate the effectiveness and accuracy of the variable-transform-based spectral approximation to FIOs by focusing on the double-exponential transform \cref{de} and on the solution of various fractional calculus problems. Extensive numerical experiments for the method based on the algebraic transform can be found in \cite{liu}. Most of the problems in this section that we solve by the DESA-based spectral method are previously intractable for existing spectral methods. 

We choose $K=L=100$ and $r = 30$ for the examples in this section, only except for the first example, where they are increased accordingly to deal with the radically small $\mu$. 

%

\subsection{Fractional Abel integral equation}\label{sec:abel}
Our warm-up problem is the simple fractional Abel integral equation
\begin{align}
u(x) + {}_{-1}\mathcal{I}_{x}^{\mu}[u](x) = f(x), \label{abel}
\end{align}
where the right-hand side
\begin{align*}
f(x) = \left(1+x\right)^{\mu} + \frac{\Gamma(1+\mu)}{\Gamma(1+2\mu)}\left(1+x\right)^{2\mu}.
\end{align*}
The exact solution is $u(x) = (1+x)^{\mu}$. What distinguishes this rather mundane example from similar numerical experiments in the literature is that we solve it for $\mu = 1, 10^{-1}, \dots, 10^{-6}$. As $\mu$ decreases, larger values of $r$, $K$, $L$, and $\omega$ are required to approximate $G(y, t)$ with sufficient accuracy. The values of $r$, $K$, and $L$ corresponding to each $\mu$ are listed in \cref{tab:K}. The values of $\omega$ are determined by \cref{beta3} and indicated by the vertical dashed lines in \cref{fig:beta}. 

The error in the computed solution for various truncation sizes $N$ is assessed by comparing with the exact solution at $10^4$ equispaced points in $[-1, 1]$, and is shown in \cref{fig:frac_left_eq}. After an initial phase of exponential decay to approximately machine precision, the errors level off without rebounce, which indicates the stability of the DESA-based spectral method. To the best of our knowledge, this appears to be the first numerical experiment in which fractional integral equations with such small fractional orders are solved to this level of accuracy, as existing methods can hardly go beyond $\mu = 0.1$.
\begin{table}[h!] 
\centering 
\caption{The values of $r$, $K$, and $L$ versus $\mu$ for approximating $G(y, t)$ to adequate accuracy.} \label{tab:K} \begin{tabular}{cccccccc} \hline \\[-9pt] $\mu$ & $1$ & $10^{-1}$ & $10^{-2}$ & $10^{-3}$ & $10^{-4}$ & $10^{-5}$ & $10^{-6}$ \\ \hline $r$ & 28 & 40 & 50 & 58 & 64 & 65 & 64 \\ $K = L$ & 80 & 100 & 120 & 170 & 350 & 660 & 920 \\ \hline 
\end{tabular} 
\end{table}

\begin{figure}[t!] 
\centering 
\subfloat[]{\label{fig:frac_left_eq} \includegraphics[width=0.45\linewidth]{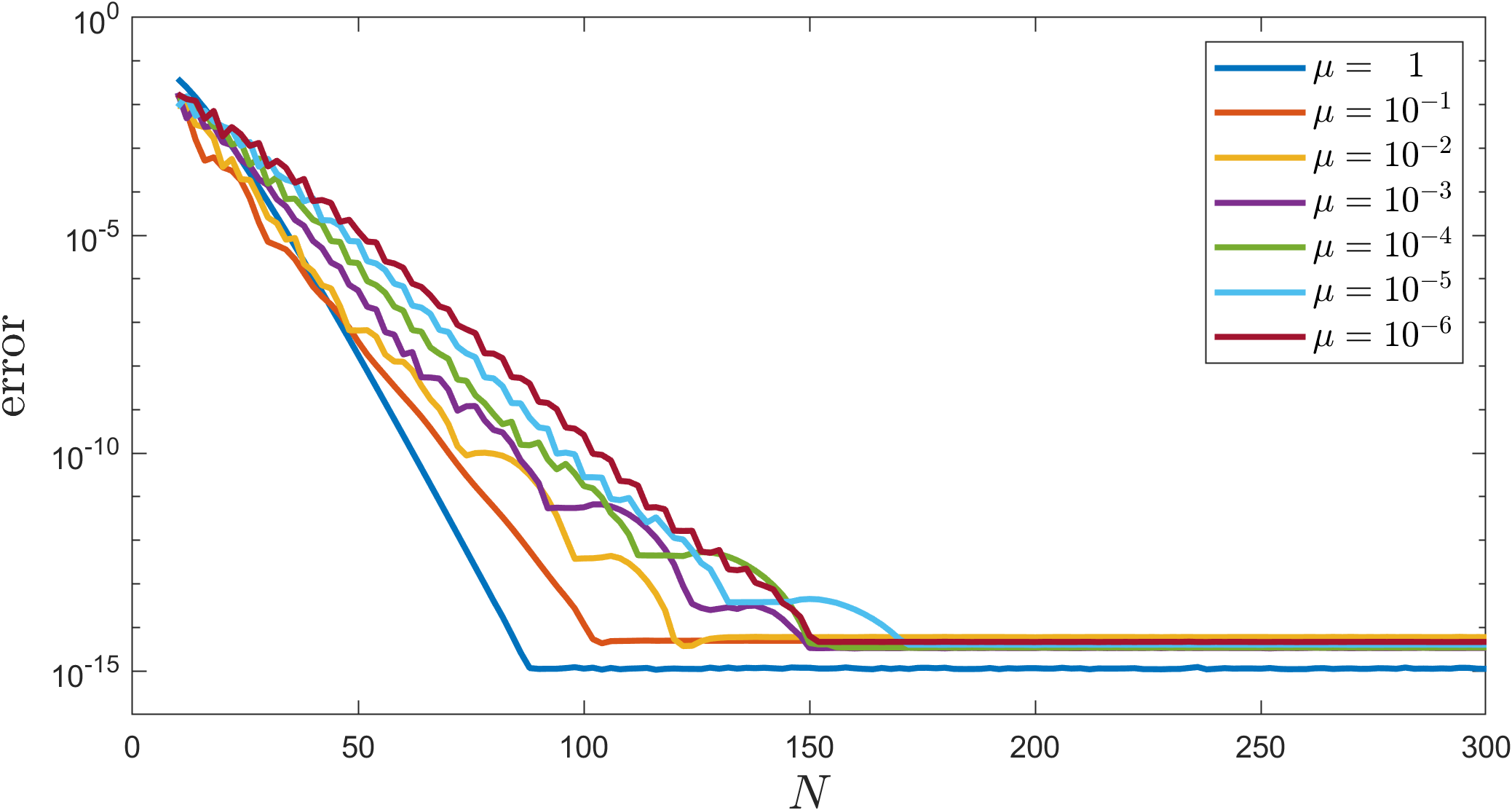} } \hfill \subfloat[]{\label{fig:riesz} \includegraphics[width=0.45\linewidth]{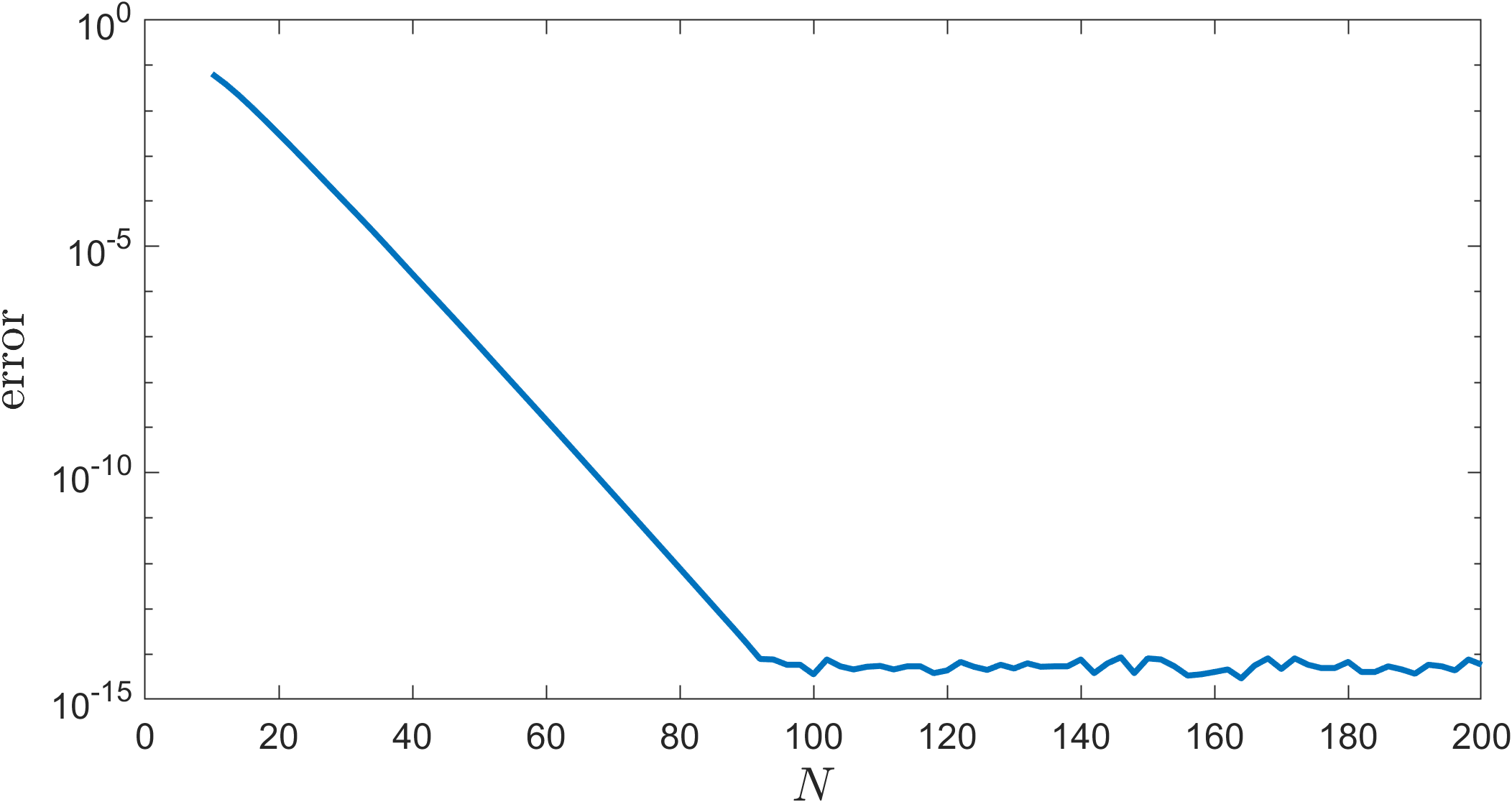} } 
\caption{Error versus $N$ for (a) the fractional Abel integral equation \cref{abel} of various fractional order $\mu$ and (b) the Riesz fractional integral equation \cref{ex:riesz}.} 
\end{figure}

\subsection{Riesz fractional integral equation}
The Riesz FIO of order $\mu$, where $\mu \neq 2k+1$ for $k \in \mathbb{N}$, is defined by
\begin{align*}
\mathcal{I}^{\mu}_R[f](x)
= \frac{1}{2\Gamma(\mu)\cos\left(\pi\mu/2\right)}
\int_{-1}^{1} \frac{f(t)}{\lvert x-t\rvert^{1-\mu}}\,\md t
= \frac{1}{2\cos\left(\pi\mu/2\right)}
\left({}_{-1}\mathcal{I}_{x}^{\mu} + {}_{x}\mathcal{I}_{1}^{\mu}\right)[f](x),
\end{align*}
which induces singularities at both endpoints $\pm 1$. To the best of our knowledge, no spectral method exists that is capable of solving FIEs involving Riesz FIOs. Consider the Riesz fractional integral equation 
\begin{align}
u(x) + \mathcal{I}_R^{1/2}[u](x) = f(x),\label{ex:riesz}
\end{align}
where the right-hand side function 
\begin{align*}
f(x) = 2\Gamma\left(1/2\right)\cos{\left(\pi/4\right)} \sqrt{1+x} + \sqrt{2(1-x)} + (1+x)\left(\frac{\pi}{2}+\operatorname{arctanh}\!\left(\sqrt{\frac{1-x}{2}}\right)\right)
\end{align*}
is chosen such that the exact solution is 
\begin{align*}
u(x) = 2\Gamma\left(1/2\right)\cos{\left(\pi/4\right)} \sqrt{1+x}.
\end{align*}

The error in the computed solution is plotted against $N$ in \cref{fig:riesz}, where the error decays exponentially to machine precision and then remains at that level.

\subsection{FIE with variable coefficients and multiple FIOs of different orders}
Our third example is
\begin{align}
u(x) + (1+x)^{2/3}\mathcal{I}_R^{\sqrt{2}}[u](x) + {}_{x}\mathcal{I}_{1}^{\pi/4}[(1-\rotatebox{45}{\scalebox{0.55}{$\square$}} )^{\sqrt{3}}u](x) + {}_{-1}\mathcal{I}_{x}^{e/3}[u](x) = 1, \label{mixed}
\end{align}
which features (1) FIOs of incompatible fractional orders, (2) singularities at both boundary points, and (3) fractional-power variable coefficients appearing as prefactors to both the FIOs and the solution. Obviously, this FIE does not fall into the category of \cref{fie} and therefore cannot be solved by the JFP method or any other existing spectral methods. This FIE can be successfully solved using DESA, and the solution is shown in \cref{fig:mixed_sub2}. The convergence of the method is evidenced by the decay of the Cauchy error, as shown in \cref{fig:mixed_sub1}. The Cauchy error decays exponentially as $N$ increases until it reaches $\mO(10^{-15})$ and then plateaus. 

\begin{figure}[H]
\centering
\subfloat[]{\label{fig:mixed_sub2}
  \includegraphics[width=0.45\linewidth]{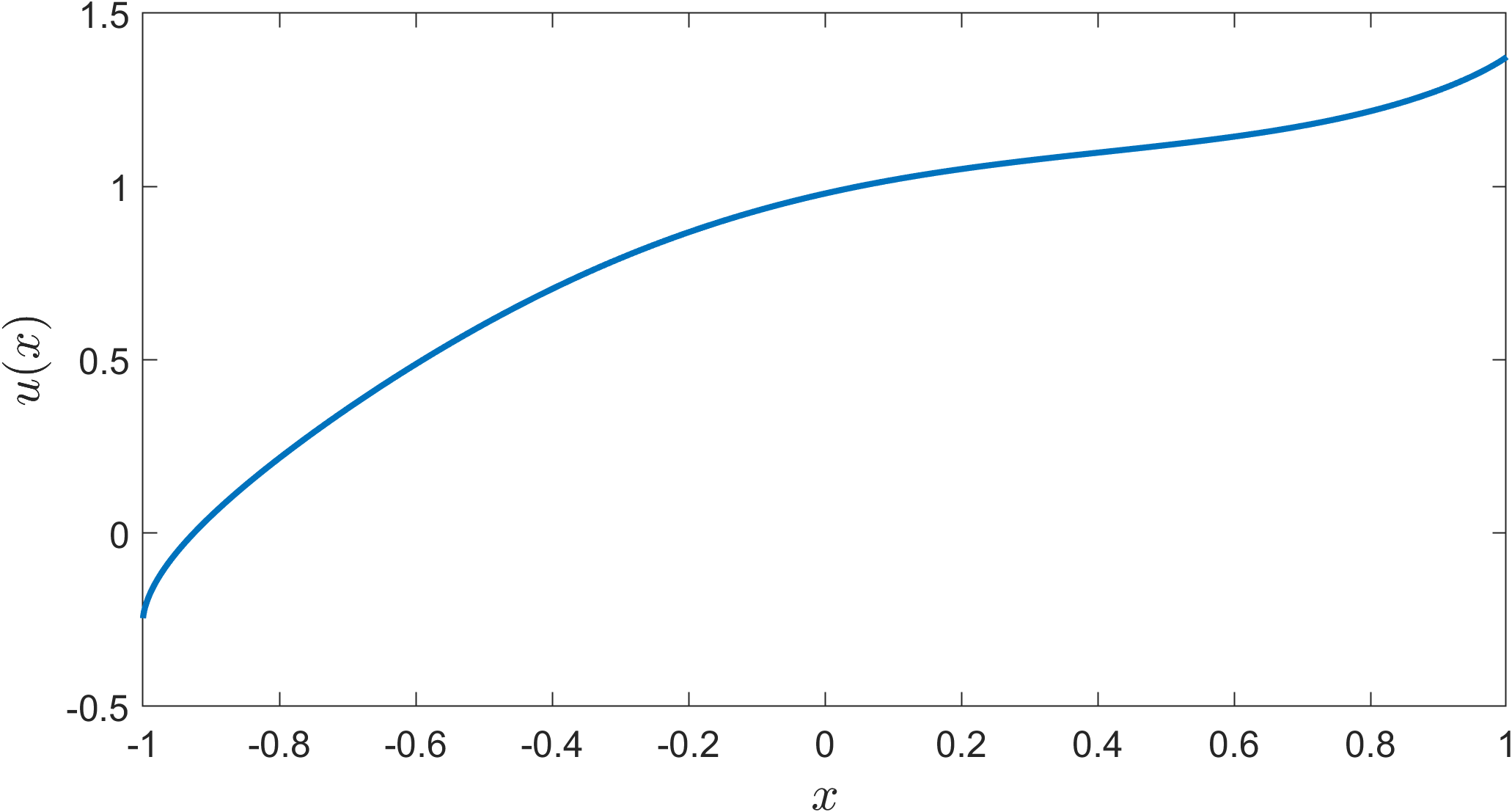}
}
\hfill
\subfloat[]{\label{fig:mixed_sub1}
  \includegraphics[width=0.455\linewidth]{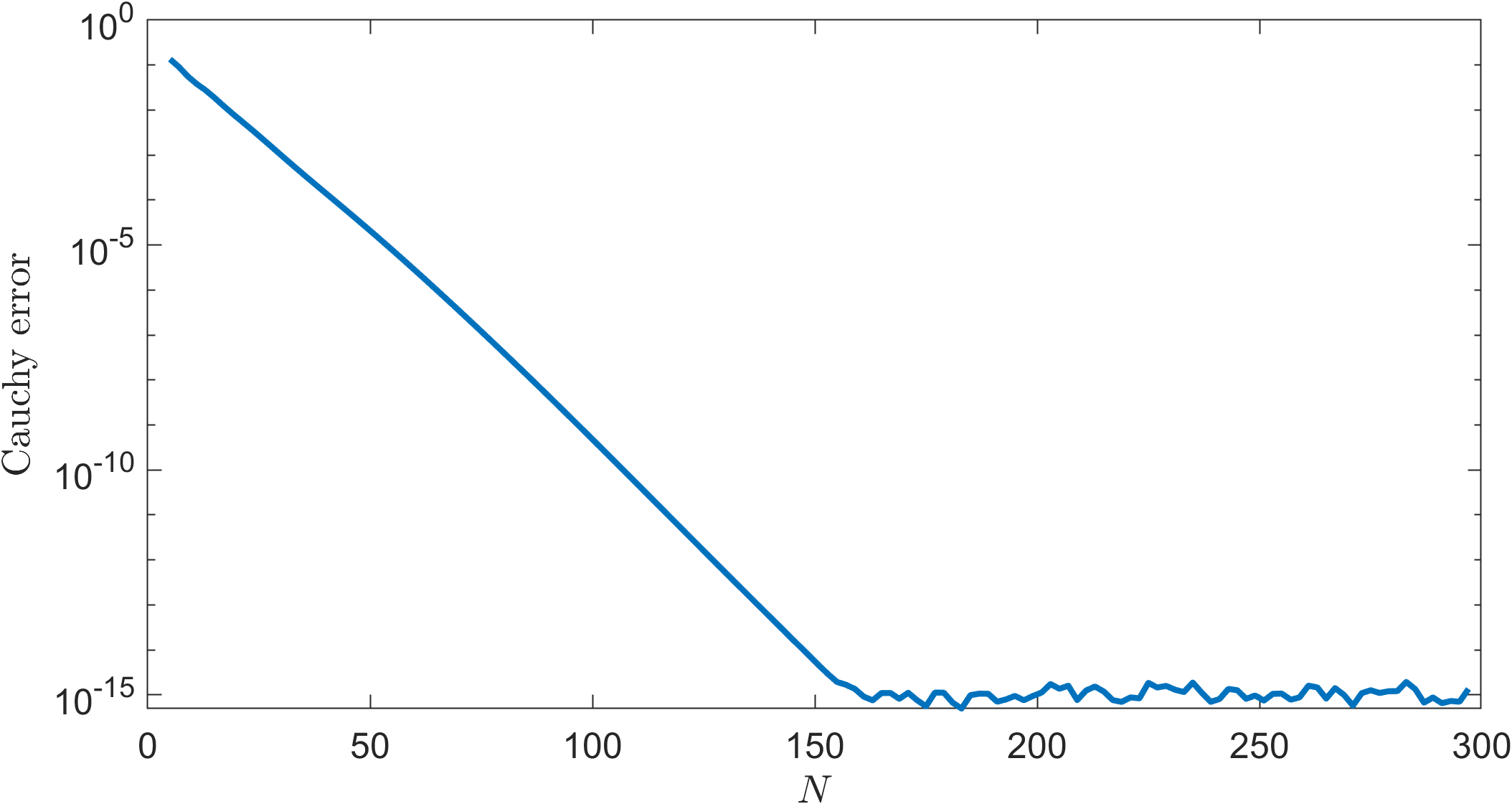}
}
\caption{Solving \cref{mixed} by the DESA-based spectral method: (a) the solution and (b) the Cauchy error.}
\end{figure}

\subsection{Fractional Airy equation}\label{sec:airy}
Our next example is the Airy equation of fractional differential order \cite{hal}
\begin{align}
\epsilon i^{3/2} {}^{RL}_{\kern0.1em-1}\mD_{x}^{3/2}[u](x) - xu(x) = 0, \quad x \in [-1,1], \quad \text{s.t. } u(-1) = 0,~~~ u(1) = 1. \label{airyeq}
\end{align}
For a singularly perturbed problem like \cref{airyeq}, the solution requires a very high-degree DETCP series approximation when $\epsilon$ is small. We take this example to demonstrate that the DESA-based spectral method remains stable when solving large-scale problems. Thus far, this problem can only be solved in a numerically stable manner by the JFP method \cite{liu}. Instabilities of different kinds have been observed when it is solved using the GLOFPG and SS spectral methods. 

To solve \cref{airyeq} via integral reformulation, we assume 
\begin{align}
u(x) = {}_{\kern0.3em-1}\mI_{x}^{3/2}[v](x) + a(1+x), \label{uv}
\end{align}
where $a$ is a constant to be determined. The ansatz in \cref{uv} ensures that the boundary condition at $x = -1$ is satisfied automatically. This reformulation converts the FDE into a linear system involving only FIOs. Substituting \cref{uv} into \cref{airyeq} yields
\begin{subequations}\label{airy}
\begin{align}
\epsilon i^{3/2}\left(\sqrt{1+x}\,v(x) + \frac{a}{\Gamma(1/2)}\right) - \left(x\sqrt{1+x}\right)\kern-0.4em {}_{\kern0.3em-1}\mI_{x}^{3/2}[v](x) - ax(1+x)^{3/2} = 0,\label{airyir}
\end{align}
subject to the boundary condition at the right endpoint
\begin{align}
{}_{\kern0.3em-1}\mI_{x}^{3/2}[v](1) + 2a = 1. \label{airyib}
\end{align}
\end{subequations}
\begin{figure}
\centering
\subfloat[]{\label{fig:airy1e-6}
  \includegraphics[width=0.45\linewidth]{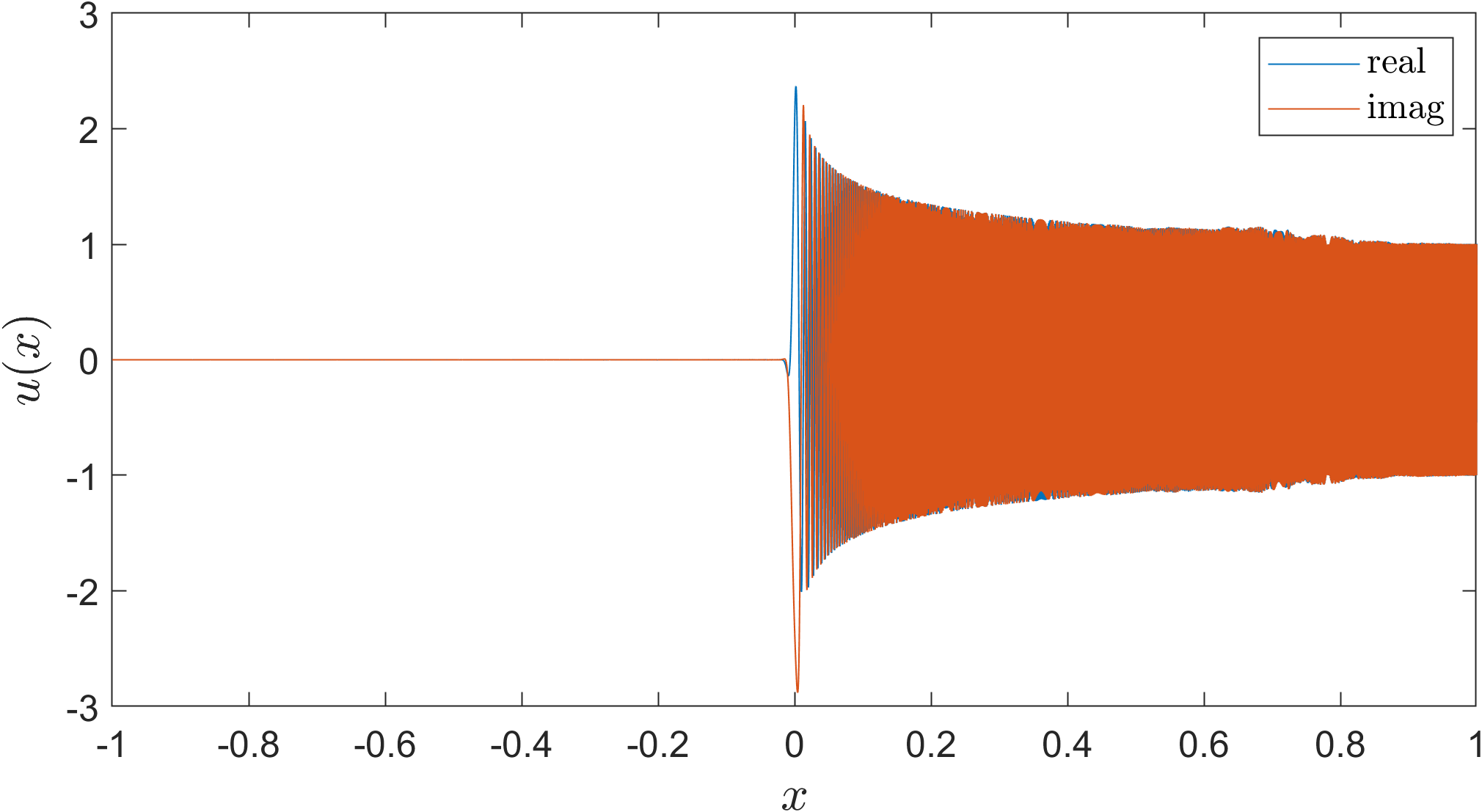}
}
\hfill
\subfloat[]{\label{fig:airy1e-6_cauchy}
  \includegraphics[width=0.47\linewidth]{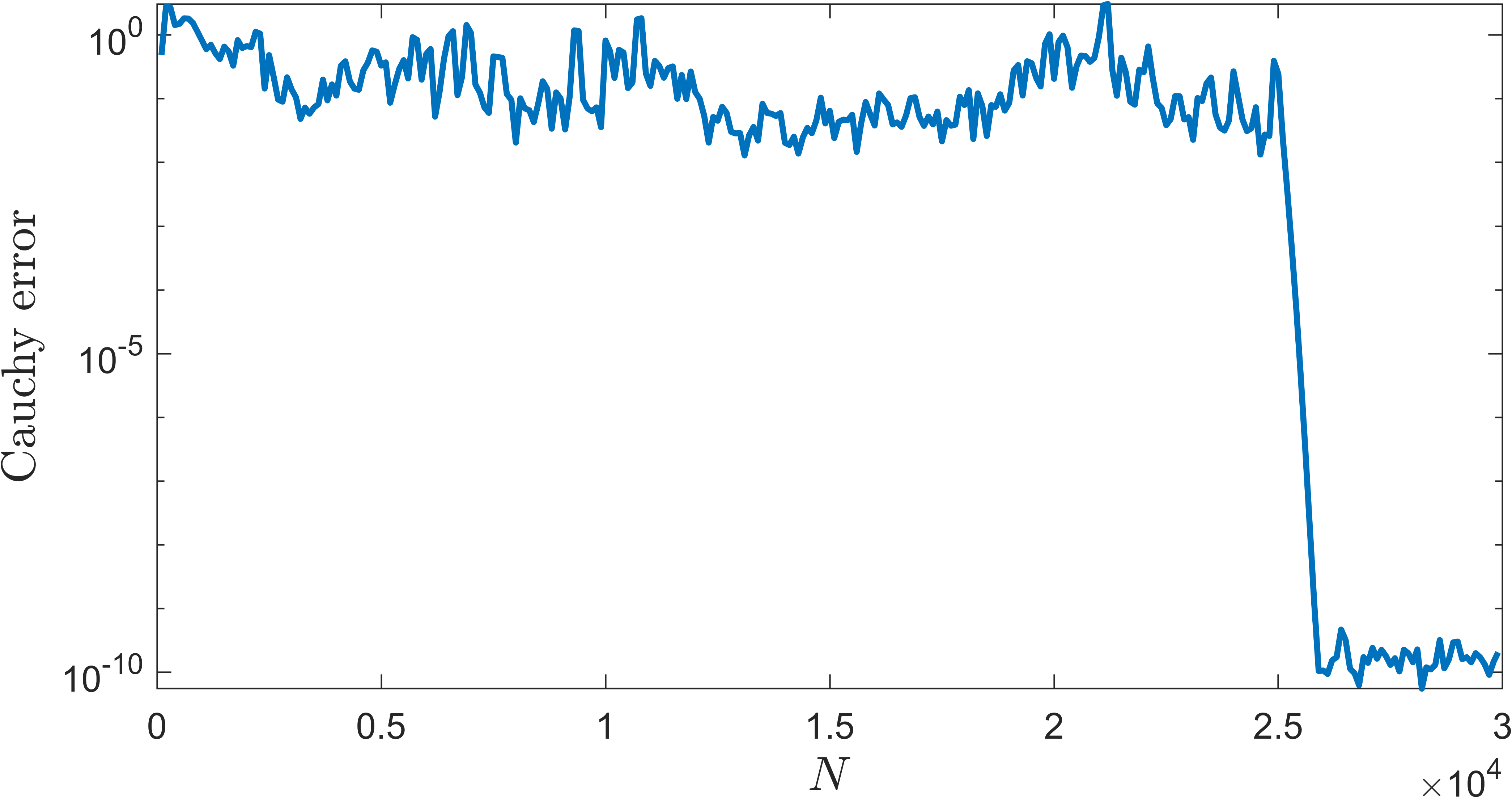}
}
\caption{Real and imaginary parts of the solution to the fractional Airy equation \cref{airy}, together with the corresponding Cauchy errors for $\epsilon = 10^{-6}$.}
\end{figure}

Let $\tilde{\mA}$ be the spectral approximation to ${}_{-1}\mI_{x}^{3/2}$, and let $\hat{v}$ be the coefficient vector of $v(x)$ in the DETCP basis. Replacing ${}_{-1}\mI_{x}^{3/2}$ and $v(x)$ in \cref{airy} with $\tilde{\mA}$ and $\hat{v}$ gives the following linear system:
\begin{align*}
\left(
\begin{array}{c|c}
\mB \tilde{\mA} & 2 \\[1mm]
\hline\\[-2mm]
\epsilon i^{3/2}\mM_1 - \mM_2 \tilde{\mA} & \hat{g} \\
\end{array}
\right)
\begin{pmatrix}
\hat{v}\\[2mm]
a
\end{pmatrix}
=
\begin{pmatrix}
1\\[2mm]
0
\end{pmatrix},
\end{align*}
where $\mB = (1, 1, \dots)$ encodes the boundary condition, $\mM_1$ and $\mM_2$ are the multiplication matrices corresponding to $\sqrt{1+x}$ and $x\sqrt{1+x}$ as given by \cref{thm:M}, and $\hat{g}$ is the vector of DETCP coefficients of 
\begin{align*}
g(x) = \frac{\epsilon i^{3/2}}{\Gamma(1/2)} - x(1+x)^{3/2}.
\end{align*}

\cref{fig:airy1e-6} displays the real and imaginary parts of the solution to \cref{airyeq} for $\epsilon = 10^{-6}$, computed with a truncation size $N = 26{,}500$. As shown in \cref{fig:airy1e-6_cauchy}, the Cauchy error decays geometrically. The sudden drop corresponds to the truncation size at which the solution begins to be resolved with a sufficient number of points per wavelength.

\subsection{Fractional eigenvalue problem}
The next example is the fractional eigenvalue problem
\begin{align}
\begin{array}{l}
{}^{RL}_{\kern0.1em-1}\mD_{x}^{\mu_1} [u](x) = -\lambda u(x) ~\text{ for }~ x \in [-1,1]\\[2mm]
\text{s.t. }~ u^{(j)}(-1) = 0 \text{ for } j = 0,1,\ldots,\ell-2 ~\text{ and }~ {}^{RL}_{\kern0.1em-1}\mD_{x}^{\mu_2}u(1) = 0,
\end{array}\label{eigdif}
\end{align}
where $\ell \geq 2$ is an integer and $\mu_1$ and $\mu_2$ satisfy $0 \leq \mu_2 < \ell - 1 < \mu_1 < \ell$. This eigenvalue problem plays a pivotal role in studying the properties of the two-parameter Mittag--Leffler function \cite{elo}. Here, $u^{(j)} = \md^j u(x)/\md x^j$. While this problem was solved in \cite{liu} by the JFP spectral method for $\mu_1 = 3/2$ and $\mu_2 = 0$, it becomes intractable for the JFP method or any other spectral methods when $\mu_1-1$, $\mu_1-\mu_2$, and $\mu_1$ do not satisfy the compatibility conditions stated below \cref{fie}. In contrast, the DESA-based spectral method can readily handle any admissible values of $\mu_1$ and $\mu_2$---we set $\mu_1 = 1.23456789$ and $\mu_2 = 0.123456789$ in our experiment.

\begin{figure}[t!]
\centering
\subfloat[]{\includegraphics[width=0.46\linewidth]{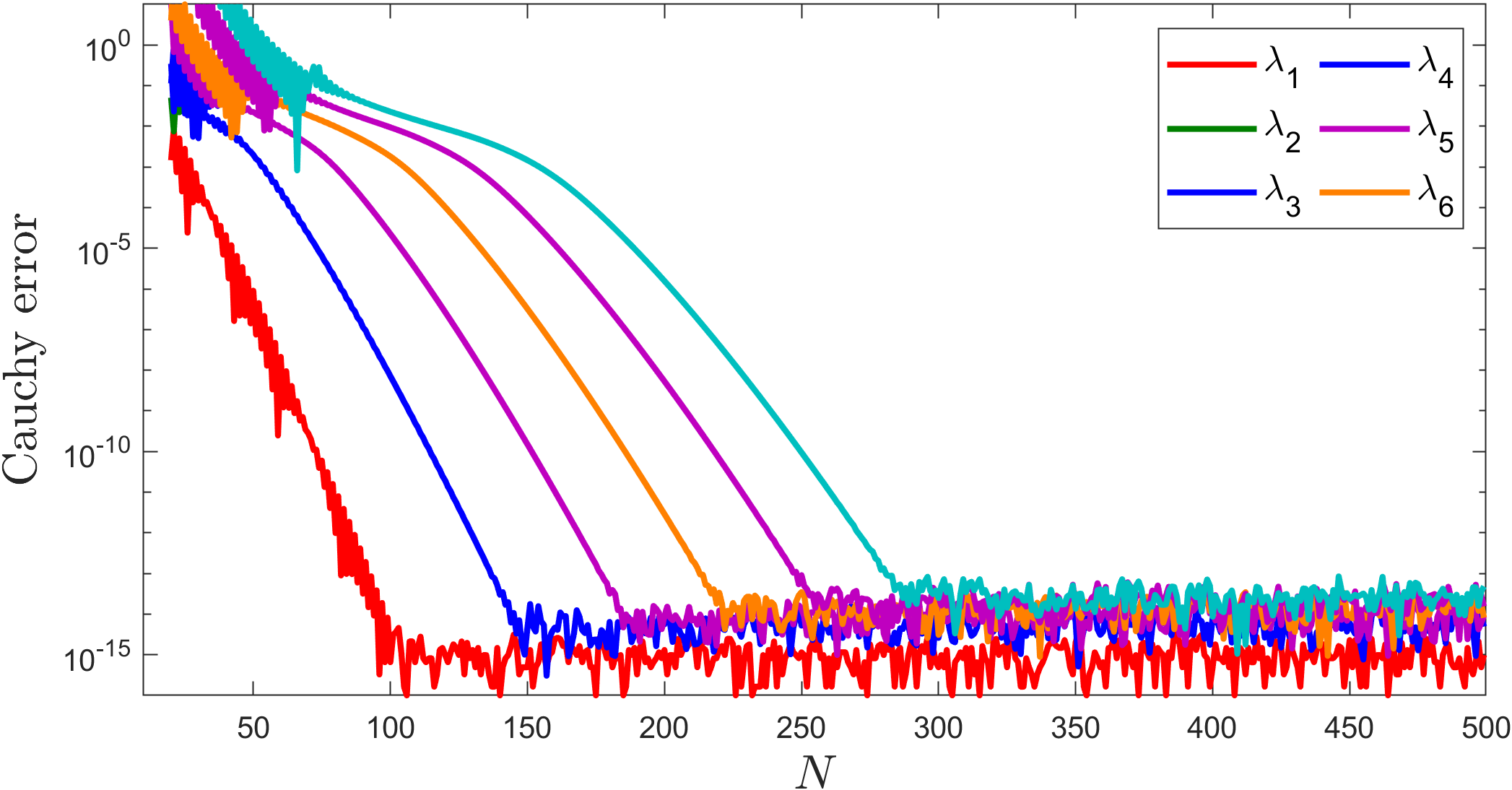}\label{fig:eig_cauchy}}
\hfill
\subfloat[]{\includegraphics[width=0.46\linewidth]{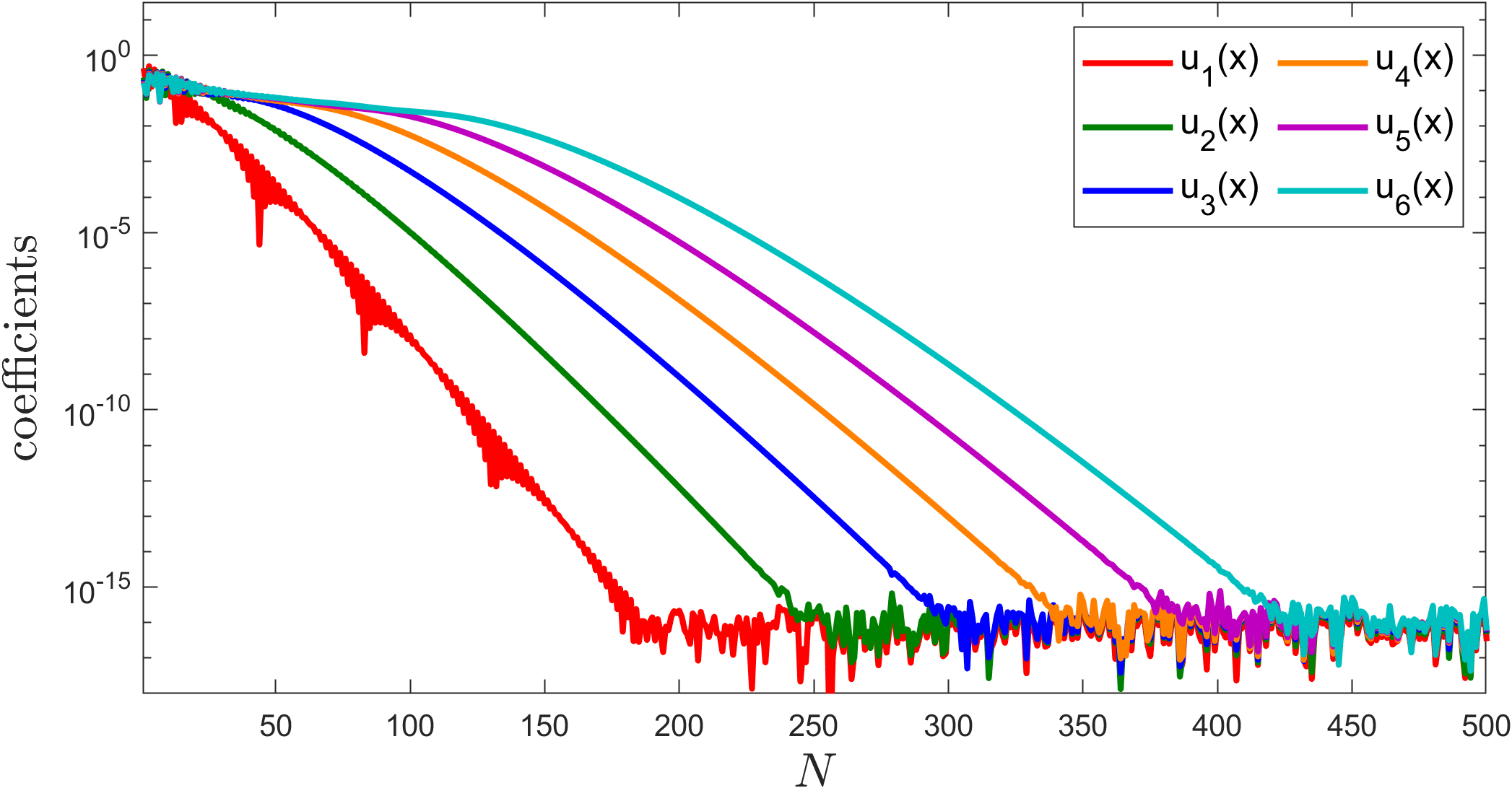}\label{fig:eigfun_coef}}
\hfill
\subfloat[]{\includegraphics[width=0.45\linewidth]{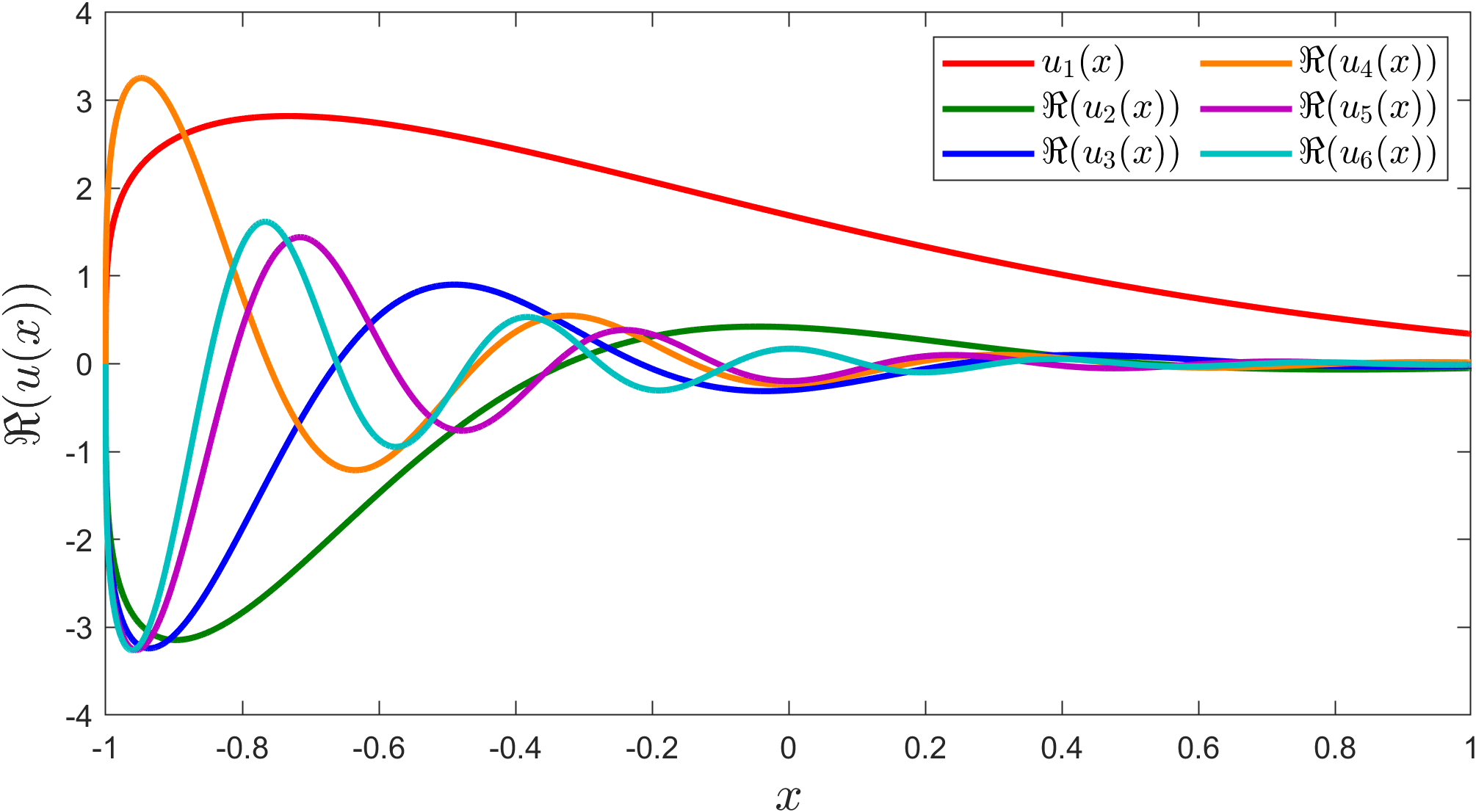}\label{fig:eigfun_real}}
\hfill
\subfloat[]{\includegraphics[width=0.46\linewidth]{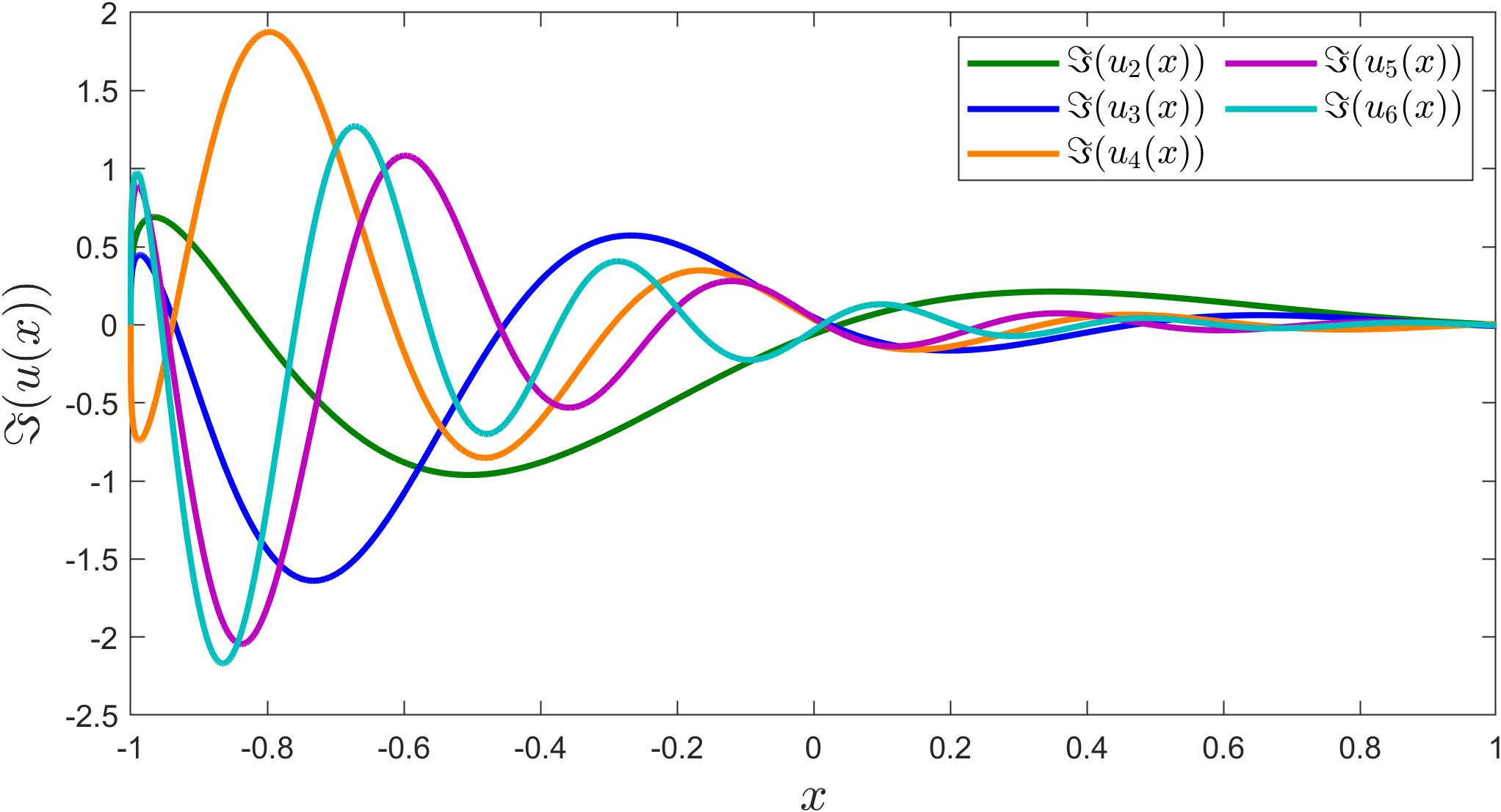}\label{fig:eigfun_imag}}
\caption{(a) Cauchy errors of the six eigenvalues of smallest modulus, obtained by solving the eigenproblem of consecutive truncation sizes using \texttt{eigs}. (b) DETCP coefficients of the eigenfunctions. The real (c) and imaginary (d) parts of the eigenfunctions corresponding to these six eigenvalues.}
\end{figure} 

Similar to FDEs, this problem can be reformulated as an FIO eigenvalue problem via integration:
\begin{align}
\vartheta (1+x)^{\mu_1 - 1} {}_{\kern-0.3em-1}\mI_{x}^{\mu_1 - \mu_2}[u](1) - {}_{\kern-0.3em-1}\mI_{x}^{\mu_1}[u](x) = \frac{1}{\lambda}u(x) , \label{eigint}
\end{align}
where $\vartheta = \displaystyle \frac{\Gamma(\mu_1-\mu_2)}{\Gamma(\mu_1)}2^{1+\mu_2-\mu_1}$ is a constant. Let $\hat{v}$ and $\hat{u}$ denote the infinite DETCP coefficient vectors of $(1+x)^{\mu_1 - 1}$ and an eigenfunction $u(x)$, respectively. Then the operator eigenvalue problem \cref{eigint} can be expressed as
\begin{align}
\left( \vartheta \hat{v} \mB \mA_1 - \mA_2 \right) \hat{u} = \varrho \hat{u}, \label{eigintmat}
\end{align}
where $\varrho = 1/\lambda$, $\mB = (1, 1, \ldots)$, and $\mA_1$ and $\mA_2$ are the spectral approximations to ${}_{\kern-0.3em-1}\mI_{x}^{\mu_1 - \mu_2}$ and ${}_{\kern-0.3em-1}\mI_{x}^{\mu_1}$, respectively.

To approximate the six eigenvalues of smallest modulus and their corresponding eigenfunctions of \cref{eigdif}, we truncate \cref{eigintmat} and compute the six eigenpairs corresponding to the six values of $\varrho$ of largest modulus by calling \textsc{Matlab}'s \texttt{eigs}. This procedure is repeated with progressively larger truncation sizes. The Cauchy error of the computed eigenvalues is measured by the $2$-norm of the difference between eigenvalues obtained at two consecutive truncation sizes. Both the Cauchy errors and the computed eigenvectors, which approximate the DETCP coefficients of the eigenfunctions, are examined for plateaus \cite{aur}. The iterative process is terminated only when both quantities exhibit plateaus (see \cref{fig:eig_cauchy,fig:eigfun_coef}), which serve as indicators of convergence and ensure accurate approximations to the eigenvalues and eigenfunctions \cite[Chap.~IV, \S 3.5]{kat}. The corresponding eigenfunctions are shown in \cref{fig:eigfun_real,fig:eigfun_imag} for the real and imaginary parts.

We list the computed values of the six eigenvalues in \cref{tab:valE}, which, except for the first one, are complex. Since $\lambda$ is an eigenvalue of \cref{eigdif} if and only if it is a zero of $E_{\mu_1, \mu_1-\mu_2}(-2^{\mu_1}\lambda)$, we also report the value of $E_{\mu_1,\mu_1-\mu_2}(-2^{\mu_1}\lambda)$ for each computed $\lambda$. The results indicate that the computed eigenvalues are accurate up to machine precision.

\begin{table}[t!]
\centering
\renewcommand{\arraystretch}{1.4} 
\setlength{\tabcolsep}{7pt}
\caption{The six eigenvalues of smallest modulus, along with the corresponding values of $E_{\mu_1,\mu_1-\mu_2}(-2^{\mu_1}\lambda)$.}\label{tab:valE}
\begin{tabular}{c c c}
\toprule
index & eigenvalue $\lambda$ & $\left\lvert E_{\mu_1,\mu_1-\mu_2}(-2^{\mu_1}\lambda)\right\rvert $ \\
\midrule
1 & $1.355201481588489$ & $1.20\times 10^{-14}$ \\
\midrule
2 & $4.930015412112804 \pm 3.094646626469975i$ & $2.36\times 10^{-14}$ \\
\midrule
3 & $8.217665634311189 \pm 8.089668419364024i$ & $1.05\times 10^{-13}$ \\
\midrule
4 & $11.387725243090559 \pm 13.804866459545323i$ & $2.27\times 10^{-13}$ \\
\midrule
5 & $14.564020446706387 \pm 20.023206234983594i$ & $9.04\times 10^{-14}$ \\
\midrule
6 & $17.778713260368924 \pm 26.640854355716016i$ & $7.87\times 10^{-13}$ \\
\bottomrule
\end{tabular}
\end{table}

\subsection{Pseudospectra of an FDO}
Our final example is the pseudospectra of the left-sided FDO of Caputo type
\begin{align}
{}_0^{\kern-0.2em C}\mD_x^{1/2}[f](x) = \frac{1}{\Gamma(1/2)}\int_0^x \frac{f'(t)}{(x-t)^{1/2}} \md t, \label{caputo}
\end{align}
subject to the boundary condition $f(0)=0$. By definition, the $\veps$-pseudospectrum of ${}_0^{\kern-0.2em C}\mD_x^{1/2}$ is the set of complex numbers $z$ such that 
\begin{align*}
\sigma_{\veps}=\left\{ z \in \mathbb{C} : \|\mR(z)\| > \veps^{-1} \right\}
\end{align*}
for $\veps > 0$. Here, $\mR(z) = \left(z\mJ - {}_0^{\kern-0.2em C}\mD_x^{1/2}\right)^{-1}$ and $\mJ$ are the resolvent and the identity operator, respectively. This problem is closely related to the famous Hille--Phillips problem \cite{hil} and is studied in \cite[\S 19]{tre}. For this operator, the spectrum is empty, i.e., \cref{caputo} has no eigenvalues at all, and the resolvent norm is exponentially large in a quadrant of the complex plane.

Trefethen and Embree computed its $\veps$-pseudospectra by a spectral collocation method following a ``discretize-then-solve'' strategy. Since the ``discretize-then-solve'' approach is at high risk of \emph{spectral pollution} and \emph{spectral invisibility}, it is more sensible and safer to compute the pseudospectra using the ``solve-then-discretize'' paradigm proposed in \cite{den}. The method is based on the key observation that
\begin{align}
\|\mR(z)\|^2 = \|\mR^*(z)\mR(z)\| = \lambda_{\max}(\mR^*(z)\mR(z)),
\end{align}
where $\lambda_{\max}(\cdot)$ denotes the largest eigenvalue. The adjoint operator \cite{agr} is given by
\begin{align*}
\mR^*(z) = \left(\bar{z}\kern0.15em\mJ - {}^{RL}_{\kern0.6em x }\mD_1^{1/2}\right)^{-1},
\end{align*}
where the right-sided Riemann--Liouville FDO
\begin{align*}
{}^{RL}_{\kern0.6em x }\mD_1^{1/2}[f](x) = -\frac{1}{\Gamma(1/2)}\frac{\md}{\md x}\int_{x}^1 \frac{f(t)}{(t-x)^{1/2}} \md t ~\text{ s.t. }~f(1)=0.
\end{align*}
Hence, we set up a Cartesian grid in the region of interest $[-2, 12]\times [-7,7]$ in the complex plane and calculate $\lambda_{\max}(\mR^*(z)\mR(z))$ by the Lanczos method at each grid point $z$. Suppose that $u(x)$ is the iterate. In each Lanczos iteration we apply $\mR^*(z)\mR(z)$ to $u(x)$, i.e.,
\begin{align*}
w(x) = \mR^*(z)\mR(z)[u](x) = \left(\bar{z}\kern0.15em\mJ - {}^{RL}_{\kern0.6em x }\mD_1^{1/2}\right)^{-1}\left(z\mJ - {}_0^{\kern-0.2em C}\mD_x^{1/2}\right)^{-1}[u](x).
\end{align*}
It amounts to successively solving two FDEs
\begin{subequations}
\begin{align}
\left(z\mJ - {}_0^{\kern-0.2em C}\mD_x^{1/2}\right) [v](x) = u(x)~\text{ s.t. }~v(0)=0, \label{lanczos1} \\
\left(\bar{z} \kern0.15em \mJ - {}^{RL}_{\kern0.6em x }\mD_1^{1/2}\right)[w](x) = v(x)~\text{ s.t. }~w(1)=0. \label{lanczos2}
\end{align}%
\end{subequations}
The value $1/\sqrt{\lambda_{\max}\left(\mR^*(z)\mR(z)\right)}$ at each $z$ is stored upon convergence of the Lanczos iteration. Once this value is available over the entire grid, it is fed into a contour plotter for a graphical display of the $\veps$-pseudospectra. 

Existing spectral methods are unable to solve \cref{lanczos1,lanczos2} in succession using exactly the same basis functions. For example, the JFP method requires different algebraic transforms for left- and right-sided singularities. With the double-sided double-exponential transform, both singularities can be handled with exactly the same basis functions. Hence, we apply ${}_{0}\mI_{x}^{1/2}$ to both sides of \cref{lanczos1} to reformulate it as an FIE
\begin{align*}
\left(z\kern0.15em {}_0\mI_{x}^{1/2} - \mJ\right)[v](x) = {}_0\mI_{x}^{1/2}[u](x),
\end{align*}
and apply ${}_{x}\mI_{1}^{1/2}$ to both sides of \cref{lanczos2} to obtain
\begin{align*} 
\left(\bar{z}\kern0.15em {}_{x}\mI_{1}^{1/2} - \mJ\right)[w](x) = {}_{x}\mI_{1}^{1/2}[v](x).
\end{align*}
The rest of the computation follows the same procedure as for the fractional Airy equation. 

The $\veps$-pseudospectra obtained by the ``solve-then-discretize'' approach are shown in \cref{fig:pseudospectra}, and they are visually indistinguishable from those given in Figure 19.2 of \cite[\S 19]{tre}.

\begin{figure}[t!]
\centering
\includegraphics[width=0.5\linewidth]{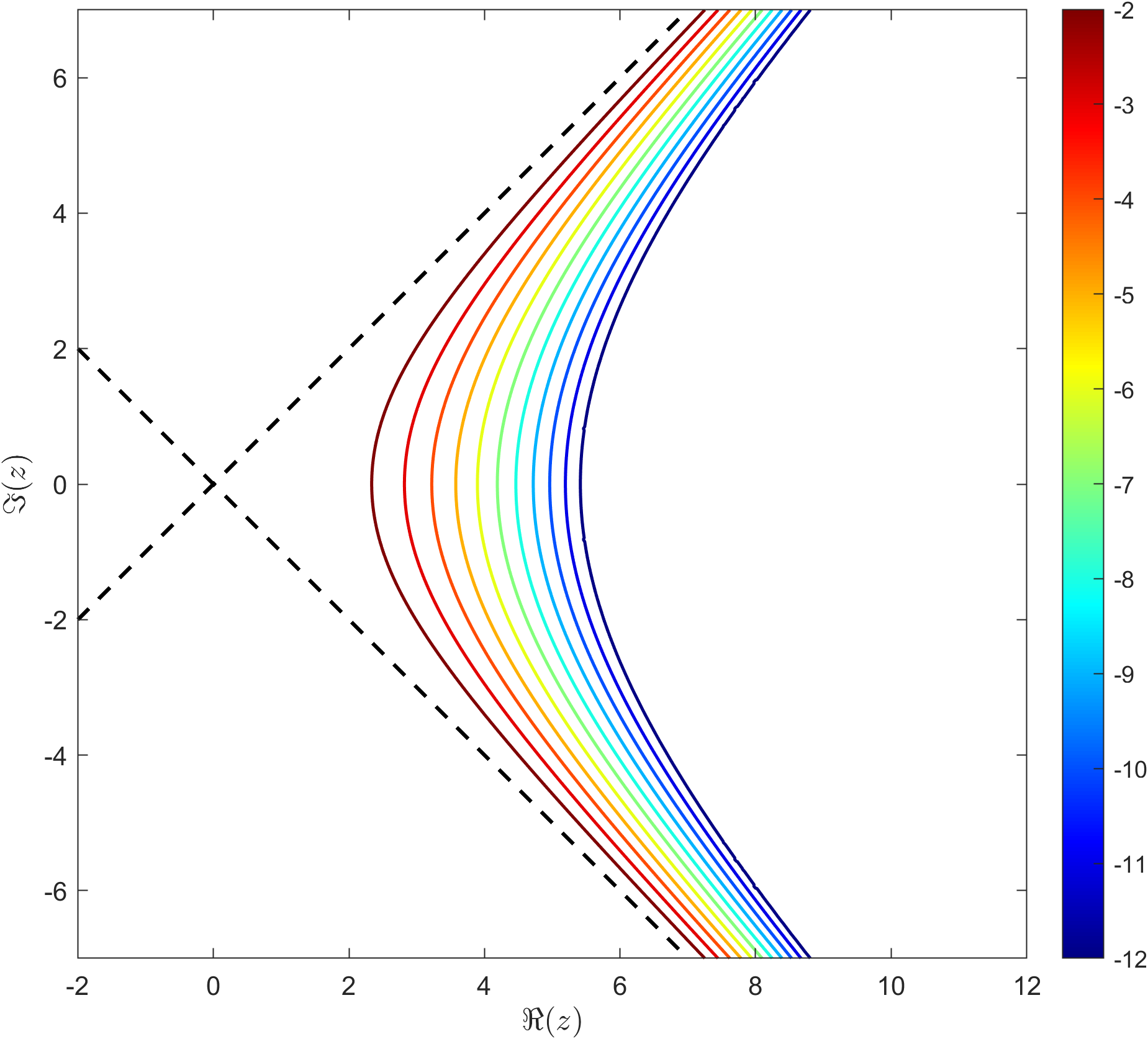}
\caption{$\veps$-pseudospectra of the half-derivative FDO \cref{caputo} for $\veps = 10^{-2}$, $10^{-3}, \ldots, 10^{-12}$, from left to right.}\label{fig:pseudospectra}
\end{figure}


\section{Discussion}\label{sec:remarks}

The basis function \cref{glof} used by the GLOFPG method is an algebraically weighted composition of the generalized Laguerre polynomial of parameter $\delta$ and the mapping
\begin{align}
y = \psi^{-1}(x) = -(\nu+1)\log x, \label{glofmap}
\end{align}
which maps $(0, 1)$ to $(0, \infty)$. 
Since the mapping \cref{glofmap} is nothing other than the inverse of the single-exponential transform
\begin{align*}
x = \psi(y) = e^{-\frac{y}{\nu+1}},
\end{align*}
approximating a given function with \cref{glof} is equivalent to composing the given function with $\psi(y)$ and then approximating the transplanted function $f(\psi(y))$ by a series of weighted Laguerre polynomials. This explains the first two drawbacks of the GLOFPG method mentioned immediately after \cref{glof}. On the one hand, because the generalized Laguerre polynomials form an unbounded basis in $C^{\infty}(0, \infty)$, the linear system of the GLOFPG method is inevitably ill-conditioned. On the other hand, the limited range of floating-point numbers implies that underflow and overflow are unavoidable whenever numbers outside this range arise in the computation. 

Unlike the GLOFPG method, most variable-transform-based methods truncate the transplanted function $f(\psi(y))$ and restrict the polynomial approximation to a finite domain where the transplanted function is nontrivial. This is why the DESAs are free from ill-conditioning as well as overflow and underflow.

Since the $n$th Chebyshev polynomial is itself obtained by composing $\cos(n\theta)$ with $\theta = \arccos x$, a TCP can be regarded as a doubly composed function involving $\cos(n\theta)$, $\theta = \arccos x$, and a variable transform designed to handle endpoint singularities.

Whereas the rank of $G(y, t)$ is on the order of dozens or hundreds for the double-exponential transform, it is exactly one for the algebraic transform. This motivates us to interpret the DESA as a linear combination of multiple JFP spectral approximations to FIOs of various fractional orders. In other words, when an FIO is approximated using DETCPs as basis functions, it can be viewed as a decomposition into multiple FIOs, each of which is approximated using a different JFP basis. Equivalently, we may view a DETCP as a spectral synthesis of infinitely many JFPs, of which only finitely many are numerically significant in approximating an FIO. This viewpoint helps explain the versatility of the DESA-based spectral method.

Despite the success and versatility of the spectral method based on the double-exponential transform, we recommend using the JFP spectral method \cite{liu} whenever possible due to its superior speed.

\section{Outlook}\label{sec:out}
For the double-exponential transform, the computational complexity depends on the rank $r$ and the degrees $K$ and $L$, as shown in \cref{alg:de}. We wonder whether new exponential transforms can be designed for which the corresponding $r$, $K$, and $L$ are smaller than those for \cref{de}. Furthermore, extensive experiments show that highly accurate, i.e., close to machine precision, results can be obtained with $r$, $K$, and $L$ that are much smaller than those required for $G(y, t)$ to be approximated to machine precision by a bivariate Chebyshev expansion. The mechanism behind this phenomenon deserves a systematic investigation, which may lead to significant and reliable savings in computational cost. Moreover, we have observed that the entries of $\mA$ above a certain upper diagonal are very small, e.g., $\mO(10^{-10})$. Although these entries are not exactly zero, it is of interest to determine whether new exponential transforms can be designed to further reduce their magnitudes so that the spectral approximation to FIOs can effectively be banded---this would bring significant speedup in both the construction and application of the spectral approximations, particularly in solving FIEs and FDEs. Finally, the definition of $G(y, t)$ is \emph{not} essential. Other definitions may work equally well if the ultimate goal is achieved, i.e., separating $y$ and $t$ in the bivariate function $\bigl(\psi(y)-\psi(y-(1+y)t)\bigr)^{\mu}$ in \cref{psi0}. It would be encouraging if the separation of variables can be achieved in a different but more efficient and less costly way.

In \cite{liu} and this work, FDEs and FDO eigenvalue problems are solved via integration reformulation. It would be most convenient if spectral approximations to FDOs can be constructed using TCPs following the same pattern, so that FDEs can be solved directly without preliminary reformulation.

The success and versatility of the proposed framework encourage us to consider problems beyond one spatial dimension, e.g., the fractional Laplacian. Other exciting but nontrivial extensions of the current work include variable-transform-based spectral methods for variable- and distributed-order fractional calculus. 

Finally, the framework presented in this paper is applicable beyond FIOs and FDOs to any Volterra integral operator for which an analogous three-term recurrence relation can be established. Moreover, after applying the change of variable \cref{cov} to the indefinite integral in \cref{IQnl}, the main part of the problem is converted from a Volterra integral to a Fredholm one. Thus, the proposed framework also applies to integral operators of Fredholm type. In other words, spectral approximations to general integral operators can be constructed in the same way by establishing a three-term recurrence relation satisfied by the result of applying the integral operator to Chebyshev or other orthogonal polynomials. Furthermore, we wonder whether the proposed paradigm for approximating integral operators can be extended to more general linear operators. We leave these questions for future work.

The \textsc{Matlab} code accompanying this paper is available at \cite{code}.

\bibliographystyle{siamplain}
\bibliography{references}
\end{document}